\documentclass[a4paper]{article}%
\usepackage{amsmath}
\usepackage{amsfonts}
\usepackage{amssymb}
\usepackage{graphicx}%
\providecommand{\U}[1]{\protect\rule{.1in}{.1in}}
\newtheorem{theorem}{Theorem}

\newtheorem{definition}[theorem]{Definition}

\newtheorem{lemma}[theorem]{Lemma}

\newenvironment{proof}[1][Proof]{\noindent\textbf{#1.} }{\ \rule{0.5em}{0.5em}}

\begin{document}
\title{Global well-posedness of the Toda lattice on an exact spectral phase space}
\author{Shuo Zhang$^{1}$ }
\date{}
\maketitle

\begin{abstract}
	We identify an exact spectral phase space for the two-sided Toda lattice.
	Let $q=\{a_n,b_n\}_{n\in\mathbb Z}$ be coefficients of the right and left
	half-line Jacobi operators and denote their spectral measures by
	$\sigma_{\pm}^{q}$. Define a phase space
	\[
	\mathcal Q=\left\{
	\begin{array}
		[c]{c}%
		q=\{a_n,b_n\}_{n\in\mathbb Z}:
		a_n>0,\ b_{n} \in \mathbb{R} \text{ and} \\
		\int_{\mathbb R}e^{c|\lambda|}\sigma^q_\pm(d\lambda)<\infty
		\text{ for every }c>0
	\end{array}
	\right\} .
	\]
	The integrability condition makes the representing measures unique. We prove
	that $q\in\mathcal Q$ if and only if the Toda lattice with initial datum $q$
	admits a classical solution for all positive and negative times. Moreover,
	the solution remains in $\mathcal Q$, is unique, and depends continuously on
	the initial datum, uniformly on compact time intervals.
\end{abstract}

\section{Introduction}
In 1967 M. Toda \cite{MT} introduced\footnotetext[1]{School of Math. Nanjing Univ. Nanjing China
	shuozhang@smail.nju.edu.cn} an anharmonic system
described by an infinite dimensional system of equations%
\[%
\begin{array}
	[c]{l}%
	\overset{\cdot}{p}_{n}(t)=e^{-\left(  q_{n}(t)-q_{n-1}(t)\right)
	}-e^{-\left(  q_{n+1}(t)-q_{n}(t)\right)  }\\
	\overset{\cdot}{q}_{n}(t)=p_{n}(t)
\end{array}
\]
with $n\in\mathbb{Z}$. This equation is called the Toda lattice, and it is
rewritten in an equivalent form%
\begin{equation}%
	\begin{array}
		[c]{l}%
		\overset{\cdot}{a}_{n}(t)=a_{n}(t)\left(  b_{n+1}(t)-b_{n}(t)\right) \\
		\overset{\cdot}{b}_{n}(t)=2\left(  a_{n}(t)^{2}-a_{n-1}(t)^{2}\right)
	\end{array}
	\label{1.1}%
\end{equation}
by Flaschka variables \cite{HF1}\cite{HF2}
\[
a_{n}(t)=\dfrac{1}{2}e^{-\left(  q_{n}(t)-q_{n-1}(t)\right)  /2}\text{,
	\ \ }b_{n}(t)=-\dfrac{1}{2}p_{n-1}(t)\text{.}%
\]
A \textbf{doubly global classical solution} to Toda lattice means that every coordinate $a_n,b_n$ belongs to $C^1(\mathbb{R})$ and
\begin{equation}
	a_n(t)>0,\qquad b_n(t)\in\mathbb{R},
	\qquad (n,t)\in\mathbb{Z}\times\mathbb{R}.
	\label{1.2}
\end{equation}
It is known that this equation has
infinitely many invariants and has a unique solution for any bounded initial
data (see \cite{GT}).

Several works treat \eqref{1.1} for unbounded initial data. Ifantis and
Vlachou \cite{EI} considered the half-line $\mathbb Z_+$ and obtained an
explicit representation of the solution. Let $J_+^q$ be the Jacobi expression
with coefficients $q=\{a_n,b_n\}_{n\in\mathbb Z_+}$,
\[
\begin{cases}
	(J_+^qu)_n=a_nu_{n+1}+a_{n-1}u_{n-1}+b_nu_n,
	& n\geq1,\\
	(J_+^qu)_0=a_0u_1+b_0u_0,
	& n=0.
\end{cases}
\]
and let $\sigma_+$ be its boundary spectral measure, so that%
\[
\left\langle(J_+^q-z)^{-1}\delta_0,\delta_0\right\rangle
=\int_{\mathbb R}\frac{\sigma_+(\mathrm d\lambda)}{\lambda-z}.
\]
Assuming that $\int_{\mathbb R}e^{t\lambda}\sigma_+(\mathrm d\lambda)<\infty$
for every $t\in\mathbb R$, they showed that the spectral measure at time $t$
is
\[
\frac{e^{2t\lambda}\sigma_+(\mathrm d\lambda)}
{\int_{\mathbb R}e^{2t\lambda}\sigma_+(\mathrm d\lambda)}.
\]
The coefficients are then recovered from the inverse spectral problem.

For the full lattice, Aggarwal \cite[Proposition~4.7]{AA} proved global
existence under the growth condition
\[
a_{n}>0\text{, }b_{n}\in\mathbb{R}\text{ and }a_{n}+\left\vert b_{n}%
\right\vert =O\left(  \left\vert n\right\vert ^{\alpha}\right)
\qquad (n\to\pm\infty)
\]
for some $\alpha\in[0,1)$. The proof proceeds by finite-volume approximation.

We establish global well-posedness on an exact phase space characterized by spectral measures. Specifically, we define the spectral phase space as follows:
\[
\mathcal Q=\left\{
\begin{array}
	[c]{c}%
	q=\{a_n,b_n\}_{n\in\mathbb Z}:
	a_n>0,\ b_{n} \in \mathbb{R} \text{ and} \\
	\int_{\mathbb R}e^{c|\lambda|}\sigma^q_\pm(d\lambda)<\infty
	\text{ for every }c>0
\end{array}
\right\} .
\]
where $\sigma^q_{\pm}$ are the boundary spectral measures of the right and
left half-line Jacobi operators associated with $q$. The exponential
integrability in the definition implies that both half-line Jacobi expressions
are essentially self-adjoint, so their spectral measures are uniquely defined.
Let
\[
\mathcal P_{\exp}(\mathbb{R}) :=\left\{\mu\ge0:\ \mu(\mathbb{R})=1,\quad \int_{\mathbb{R}}e^{c|x|}\mu(d x)<\infty\ \text{for every }c>0\right\}.
\]
For $\mu\in\mathcal P_{\exp}(\mathbb{R})$, define its bilateral Laplace transform by
\begin{equation}
	\mathcal{L}_\mu(z)=\int_{\mathbb{R}}e^{xz}\mu(d x),\qquad z\in\mathbb{C}.
	\label{1.3}
\end{equation}
It is entire.  Put
\begin{equation}
	p_R(f):=\sup_{|z|\le R}|f(z)|,
	\qquad
	\vartheta(s):=\frac{s}{1+s},\quad s\ge0,
	\label{1.4}
\end{equation}
and define
\begin{equation}
	d_{\mathcal{L}}(\mu,\nu)
	:=\sum_{m=1}^{\infty}2^{-m}
	\vartheta\!\left(p_m(\mathcal{L}_\mu-\mathcal{L}_\nu)\right).
	\label{1.5}
\end{equation}
The metric on $\mathcal{Q}$ is defined by
\begin{equation}
	\mathrm{d}\left(  q_{1},q_{2}\right) = \left| a_{-1}(q_{1}) - a_{-1}(q_{2}) \right| +  d_{\mathcal{L}}(\sigma_{+}^{q_{1}},\sigma_{+}^{q_{2}}) +  d_{\mathcal{L}}(\sigma_{-}^{q_{1}},\sigma_{-}^{q_{2}}). \label{1.6}
\end{equation}
Our main theorem is stated as follows:

\begin{theorem}
	\label{t1} Let $\mathcal{Q}$ be equipped with the metric $d$ defined in \eqref{1.6}. For any $q \in \mathcal{Q}$, there exists a unique doubly global classical solution $q(t) \in \mathcal{Q}$ for all $t \in \mathbb{R}$ to the Toda lattice with $q(0) = q$. Moreover, for every $T>0$, if $d(q_{j},q) \to 0$, then
	\[
	\sup_{|t|\leq T} d(q_{j}(t),q(t)) \to 0.
	\]
	The map $t\mapsto q(t)$ is continuous from $\mathbb R$ to
	$(\mathcal Q,d)$.
	Conversely, if a doubly global classical solution with initial datum $q$ exists, then $q \in \mathcal{Q}$.
\end{theorem}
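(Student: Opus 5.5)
The plan is to reduce the two-sided lattice to a pair of half-line problems linked by a single coupling coefficient, and then use the explicit Ifantis--Vlachou flow on spectral measures. Given $q\in\mathcal Q$, split it into the right half $\{a_n,b_n\}_{n\ge0}$ and the left half $\{a_{-n-2},b_{-n-1}\}_{n\ge0}$, linked by $a_{-1}$. The datum $q$ is then encoded by the triple $(a_{-1},\sigma_+^q,\sigma_-^q)\in(0,\infty)\times\mathcal P_{\exp}\times\mathcal P_{\exp}$. Exponential integrability forces determinacy of the moment problem, since Carleman's condition holds. Hence each half-line operator is essentially self-adjoint and the inverse spectral map (orthogonal polynomials, three-term recurrence) recovers the coefficients uniquely. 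So this encoding is a bijection onto its image, and $d$ is a genuine metric.

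For the evolution, write the two-sided equations at the boundary sites. The right half obeys the half-line Toda lattice except that $\dot b_0=2(a_0^2-a_{-1}^2)$ involves the coupling. Symmetrically, the left half (with the reflection $b\mapsto b$, $n\mapsto -n-1$) feels $a_{-1}$ through $\dot b_{-1}$, and $\dot a_{-1}=a_{-1}(b_0-b_{-1})$. The ansatz is
\[
\sigma_\pm(t)=\frac{e^{\pm2t\lambda}\sigma_\pm(d\lambda)}{\mathcal L_{\sigma_\pm}(\pm2t)},\qquad a_{-1}(t)^2=a_{-1}^2\,\frac{\mathcal L_{\sigma_+}(2t)\,\mathcal L_{\sigma_-}(-2t)}{\text{const}},
\]
with the signs and the exact coupling formula fixed by one Lax-pair computation. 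The first moments of $\sigma_\pm(t)$ are $b_0(t)$ and $b_{-1}(t)$, so $\frac{d}{dt}\log\mathcal L_{\sigma_+}(2t)=2b_0(t)$; this is what makes $\dot a_{-1}=a_{-1}(b_0-b_{-1})$ hold. The extra boundary term $-2a_{-1}^2$ in $\dot b_0$ is absorbed because the Weyl function of the full operator at $\delta_0$ is a rational combination of the two half-line $m$-functions and $a_{-1}^2$, and the Lax equation $\dot J=[J,P]$ for the full $J$ evolves that combination consistently.

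I would actually carry this out in reverse: define $q(t)$ by the formulas above and verify the equations via the moment representation $a_n,b_n$ = ratios of Hankel determinants of the moments of $\sigma_\pm(t)$. These moments are derivatives of $\mathcal L$, so they are smooth in $t$, which gives $C^1$ and also $q(t)\in\mathcal Q$ for all $t$, since $e^{\pm2t\lambda}$ preserves $\mathcal P_{\exp}$. Continuity of $t\mapsto q(t)$ and continuous dependence follow because $\mathcal L_{\sigma(t)}(z)=\mathcal L_\sigma(z\pm2t)/\mathcal L_\sigma(\pm2t)$. Convergence in $d_{\mathcal L}$ is locally uniform convergence of entire functions, so it persists uniformly for $|t|\le T$ after shifting by $2t$, and the denominators stay bounded away from $0$ on compacts.

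Uniqueness and the converse share a single mechanism. If a doubly global classical solution exists, then for each fixed finite truncation the Lax structure gives $\frac{d}{dt}\langle J_+^k\delta_0,\delta_0\rangle$ in terms of higher moments, and one needs the half-line moment sequence $s_k(t)$ to satisfy $\dot s_k=2(s_{k+1}-s_0s_k)$ up to coupling corrections. Then $\Phi(t)=\sum_k s_k(0)(2t)^k/k!$ must converge, so $\int e^{2t\lambda}\sigma_+<\infty$ for all real $t$, for both signs of $t$ because the solution exists in both time directions. This yields $q\in\mathcal Q$, and uniqueness follows since the moments are determined by this ODE hierarchy. The main obstacle is exactly this step. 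From a mere $C^1$ solution with possibly unbounded coefficients one must justify that the moments evolve by the hierarchy and that the resulting formal series converges. Since no a priori growth control is available, I would try to handle it by positivity: $e^{2t\lambda}$ weights are positive, so Hankel positivity plus monotone convergence should bound partial sums by quantities finite along the given solution.
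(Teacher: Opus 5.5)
There is a genuine gap. The existence argument rests on a false formula, and the converse and uniqueness are left without the idea that makes them work.

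\textbf{Existence.} Your encoding of $q$ by $(a_{-1},\sigma_+^q,\sigma_-^q)$ is fine. The trouble is that the half-line measures of the two-sided lattice do not evolve by exponential tilting, for any choice of signs or normalization. The mean of $\sigma_+^{q(t)}$ is $b_0(t)$ and its variance is $\|(J_+-b_0)\delta_0\|^2=a_0(t)^2$. Any family $e^{2t\lambda}\sigma(d\lambda)/\mathcal L_\sigma(2t)$ satisfies $\frac{d}{dt}(\text{mean})=2\,(\text{variance})$. So your ansatz forces $\dot b_0=2a_0^2$, whereas the lattice requires $\dot b_0=2(a_0^2-a_{-1}^2)$ with $a_{-1}>0$; the same failure occurs for $b_{-1}$.

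No property of the full Weyl function can ``absorb'' this term, because $\sigma_+^{q(t)}$ depends on the left half through $a_{-1}(t)$. The true law is the nonlinear equation $\partial_tL_+=2\partial_zL_+-2b_0L_+-2a_{-1}^2\mathcal C(L_+,L_+)$ of Lemma \ref{l14}, which admits no tilt solution. Likewise, your coupling formula for $a_{-1}(t)^2$ holds only as the inequality \eqref{4.2}. That inequality is strict for $t\neq0$, because $\mathcal C(L,L)(x)$ has the strict sign of $x$.

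Consequently the construction of $q(t)$, its $C^1$ regularity, the invariance of $\mathcal Q$, and the continuity argument via $\mathcal L_\sigma(z\pm2t)/\mathcal L_\sigma(\pm2t)$ all fail. The paper has to work much harder here:
\begin{itemize}
\item it approximates $q$ by data with truncated spectral measures and solves the bounded lattice for each approximant;
\item it obtains uniform compact-time bounds on $L_\pm$, $b_0$, $b_{-1}$, $a_{-1}$ from \eqref{4.16}, using the mixed characteristic estimate and Gronwall (Lemma \ref{l20});
\item it passes to the limit using normal families of the entire functions $F_n$ and a recursive reconstruction of all coefficients.
\end{itemize}

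\textbf{Converse and uniqueness.} You locate the obstacle correctly but do not supply the idea that removes it. The hierarchy, which should read $\dot s_k=2(s_{k+1}-s_1s_k)-2a_{-1}^2\sum_{\ell=0}^{k-1}s_\ell s_{k-1-\ell}$, is not closed. The coefficients are a priori only $C^\infty$ in time, so nothing forces $\sum_k s_k(0)(2t)^k/k!$ to converge. Also, ``the hierarchy determines the moments'' yields uniqueness only once analyticity in $t$ is known.

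The missing ingredient is the auxiliary function $F_{n,t_0}(\tau)=\exp\bigl(2\tau b_n(t_0)+4\int_0^\tau(\tau-s)a_n(t_0+s)^2\,ds\bigr)$. It is finite for all real $\tau$ precisely because the solution is global in both time directions. The argument then runs as follows.
\begin{itemize}
\item To any prescribed order, its Taylor coefficients at $0$ agree with those of $\langle e_{I_0},e^{\tau K}e_{I_0}\rangle$ for a sufficiently large finite truncation. Here $K$ is built from a compound matrix through the QR representation of the finite Toda flow.
\item This gives Hankel positivity of the derivatives of $F_{n,t_0}$ at every real point.
\item Lemma \ref{l11} then yields $F_{n,t_0}=\int e^{2\lambda\tau}\rho_{n,t_0}(d\lambda)$, with $\rho_{n,t_0}$ having exponential moments of all orders.
\item The comparison Lemma \ref{l4}, applied to the principal block $2J_{[n,R]}$, gives $s_{2m,+}\le r_{2m}$. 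Hence $\int\cosh(c\lambda)\,\sigma^{q(t_0)}_{n,+}(d\lambda)\le\frac12\bigl(F_{n,t_0}(c/2)+F_{n,t_0}(-c/2)\bigr)$.
\end{itemize}
Uniqueness then follows because $F_{n,0}$ is entire and its Taylor coefficients are local polynomials in the initial data.
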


Let $\{ a_{n}(t),b_{n}(t) \}_{n \in \mathbb{Z}}$ be a doubly global classical solution to \eqref{1.1}. Define 
\[
F_{n}(\tau)
=\exp\left( 4\int_0^{\tau}(\tau-s)a_{n}(s)^2 \mathrm{d}s
\right),
\qquad \tau\in\mathbb{R},
\]
which is the key to our proof and satisfies

\begin{theorem}
	\label{p26}There exists a probability measure $\rho_{n}$ on $\mathbb{R}$ satisfying
	\[
	\int_{\mathbb{R}}e^{c|\lambda|}\rho_{n}(\mathrm{d}\lambda)<\infty,
	\qquad \text{for every } c>0,
	\]
	such that
	\[
	F_{n}(\tau)=\int_{\mathbb{R}}e^{2\lambda\tau}\rho_{n}(\mathrm{d}\lambda).
	\]
	Therefore, $F_{n}(z)$ is an entire function and then $4a_{n}(\tau)^2 = (\log F_{n})''(\tau)$ can be extended to a meromorphic function on $\mathbb{C}$ which is holomorphic on a neighborhood of $\mathbb{R}$, and therefore $b_{n}(\tau)$ does as well.
\end{theorem}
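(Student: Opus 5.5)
The plan is to prove that $F_n$ is \emph{exponentially convex} on $\mathbb R$, and then to invoke the Bernstein--Widder theorem. Recall that a continuous $f$ is exponentially convex if $\sum_{i,j} c_i c_j f(x_i+x_j)\ge 0$ for all finite families of reals $x_i$ and coefficients $c_i$. By Bernstein--Widder, such an $f$ is the two-sided Laplace transform of a positive measure.

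Applied to $F_n$, this gives a positive measure $\rho_n$ with $F_n(\tau)=\int e^{2\lambda\tau}\rho_n(d\lambda)$. Since $F_n(0)=1$, $\rho_n$ is a probability measure. Since $F_n(\tau)<\infty$ for every real $\tau$, we get $\int e^{c|\lambda|}\,d\rho_n\le F_n(c/2)+F_n(-c/2)<\infty$ for every $c>0$. The remaining assertions are then formal. First, $F_n$ extends to an entire function by differentiating under the integral. Second, $4a_n^2=(\log F_n)''$ holds on $\mathbb R$ by definition, since $F_n'=4\bigl(\int_0^\tau a_n^2\bigr)F_n$. Because $F_n>0$ on $\mathbb R$, $(\log F_n)''=(F_nF_n''-F_n'^2)/F_n^2$ is meromorphic on $\mathbb C$ and holomorphic near $\mathbb R$. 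Third, $b_n$ inherits this regularity: from $\dot a_n=a_n(b_{n+1}-b_n)$ we have $b_{n+1}-b_n=\tfrac12(\log a_n^2)'$, and from $\dot b_n=2(a_n^2-a_{n-1}^2)$, $b_n$ is obtained by integrating a function that is holomorphic near $\mathbb R$.

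To get exponential convexity I would work with the Hankel determinants
\[
D_k(\tau)=\det\bigl[F_n^{(i+j)}(\tau)\bigr]_{0\le i,j\le k-1},\qquad D_0=1,\ D_1=F_n .
\]
These satisfy the Sylvester (Toda tau-function) identity $D_{k+1}D_{k-1}=D_kD_k''-(D_k')^2$, that is,
\[
D_{k+1}=\frac{D_k^2}{D_{k-1}}(\log D_k)''.
\]
Strict positivity of all $D_k(\tau)$ for all $\tau$ makes every Taylor/Hankel form of $F_n$ positive definite. Together with continuity on the whole line, this yields exponential convexity via the differential characterization of exponentially convex functions. Concretely, at each $\tau$ one solves a determinate Hamburger moment problem and shows that the resulting measures coincide after the tilt by $e^{2\lambda\tau}$.

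The inductive computation of the $D_k$ uses the Toda equations. From (1.1),
\[
(\log a_m^2)''=4\bigl(a_{m+1}^2+a_{m-1}^2-2a_m^2\bigr).
\]
This gives $D_2=4a_n^2F_n^2$ and then $(\log D_2)''=4(a_{n+1}^2+a_{n-1}^2)>0$, so $D_3>0$. Inductively, I expect $(\log D_k)''$ to be a sum of positive terms built from the tau-functions of neighbouring sites $n\pm1,\dots,n\pm(k-1)$. The intended expression is $D_k/F_n^k=$ a sum over configurations of products of $4a_m^2$'s, in the spirit of the explicit two-sided Toda tau-function expansions. To prove that formula I would establish a stronger inductive hypothesis: each $D_k$ is, up to the factor $F_n^k$, itself log-convex because it satisfies an equation of the same type, $(\log \cdot)''=$ a positive combination of $a_m^2$.

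This positivity step is the main obstacle, since the structure of $(\log D_k)''$ grows with $k$. If the combinatorial induction becomes unwieldy, my fallback is the Lax pair $\dot L=[P,L]$. I would try to show that $\tau\mapsto\langle e^{2\tau L}\delta_n,\delta_n\rangle$, suitably normalised along the flow, reproduces $F_n$. Positivity would then be automatic from the spectral theorem, but only after justifying the propagator for possibly unbounded coefficients by truncation and a limiting argument.
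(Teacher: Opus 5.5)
There is a genuine gap. The outer parts of your argument are fine. Positive semidefiniteness of $[F_n^{(p+q)}(\tau)]_{p,q}$ at every $\tau$ does give the Laplace representation; this is the Bernstein-type statement the paper proves as Lemma \ref{l11}. Your Sylvester identity and your values of $D_2$ and $D_3$ are correct. The formal consequences also follow, except that for the meromorphic extension of $b_n$ to all of $\mathbb C$ you should use $2\int_0^t a_n^2\,\mathrm ds=\frac12(\log F_n)'(t)$, not merely integrate a function holomorphic near $\mathbb R$.

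The gap is the positivity itself, which is the whole content of the theorem. You do not prove it, and the inductive structure you propose for it is false. Carrying your computation one step further, with $u=a_{n+1}^2$ and $v=a_{n-1}^2$, gives
\[
(\log D_3)''=\frac{16uv}{u+v}+\frac{4\bigl(u\,a_{n+2}^2+v\,a_{n-2}^2\bigr)}{u+v}+\frac{4uv\,\bigl(b_{n+2}-b_{n+1}-b_n+b_{n-1}\bigr)^2}{(u+v)^2}.
\]
So already $(\log D_3)''$ is not a positive combination of the $a_m^2$. Correspondingly, $D_4/F_n^4$ contains the term $4096\,a_n^6a_{n+1}^2a_{n-1}^2(b_{n+2}-b_{n+1}-b_n+b_{n-1})^2$, so it is not a sum of products of $a_m^2$'s. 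It is positive because it is a Gram determinant, a sum of squares whose form grows with $k$. An induction on $(\log D_k)''$ through the Toda equations has no handle on this for general $k$.

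Your fallback uses the wrong object. For a fixed two-sided operator, $(\log\langle e^{2\tau J_q}\delta_n,\delta_n\rangle)''(0)=4(a_n^2+a_{n-1}^2)$, not $4a_n^2$, and no normalising factor $ce^{\gamma\tau}$ changes this. Concretely, take the stationary free lattice $a_m\equiv\frac12$, $b_m\equiv0$:
\begin{itemize}
\item $F_n(\tau)=e^{\tau^2/2}$, so $\rho_n$ is Gaussian, with unbounded support;
\item $\langle e^{2\tau J_q}\delta_n,\delta_n\rangle=\sum_{k\ge0}\tau^{2k}/(k!)^2$ is the transform of the arcsine law on $[-1,1]$.
\end{itemize}
The half-line operator does no better. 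It is not isospectral under the two-sided flow, and at $\tau=0$ the fourth derivatives of the two logarithms differ by $16a_n^2a_{n-1}^2$.

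The missing idea is to replace the single site by a block determinant over a finite truncation $J_0$ on $[L,R]$, with $I_0=\{L,\dots,n\}$.
\begin{itemize}
\item The QR factorization $e^{\tau J_0}=Q(\tau)R(\tau)$ of the finite Toda flow gives $\det\bigl(e^{2\tau J_0}[I_0,I_0]\bigr)=\prod_{k=L}^{n}r_k(\tau)^2$. Its second logarithmic derivative telescopes to $4\widehat a_n(\tau)^2$ because $\widehat a_{L-1}=0$.
\item Since $C_d(e^{zA})=e^{zA^{[d]}}$, this determinant equals $\langle e_{I_0},e^{2\tau J_0^{[d]}}e_{I_0}\rangle$ with $J_0^{[d]}$ real symmetric.
\item Hence its derivative Hankel matrices at $0$ are Gram matrices of the vectors $(J_0^{[d]})^pe_{I_0}$. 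This is exactly the sum-of-squares structure behind your $D_k$.
\item Lemma \ref{l10} transfers this positivity to $F_{n,t_0}$ at $\tau=0$ once $L,R$ are far from $n$, with no limit of finite solutions needed.
\item Time translation moves the positivity to every $\tau$, and Lemma \ref{l11} finishes.
\end{itemize}
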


Theorem \ref{p26} expresses a basic Toda quantity $F_{n}$ as the bilateral Laplace transform
of a positive measure. Its proof uses the QR representation of the finite Toda
lattice and the exponential of a finite symmetric Jacobi matrix. No limiting
argument for finite-volume solutions is needed: each fixed Jacobi moment and
each time derivative appearing in the argument depends on only finitely many
neighboring coefficients.

Section 2 collects the compound-matrix, finite Toda, and moment-theoretic tools
used below. In Section 3 we prove Theorem \ref{p26}, exponential integrability
of the half-line spectral measures, and uniqueness. Section 4 establishes
compact-time estimates for the boundary Laplace transforms and then proves
existence and continuous dependence by approximation.

We also note the connection with the growth condition used in \cite{AA}.
By the standard moment estimate for Jacobi matrices,
$a_n,|b_n|=o(|n|)$ as $n\to\pm\infty$ implies $q\in\mathcal Q$; see
\cite[Corollary 1]{SK}. Thus Theorem \ref{t1} includes, in particular, the
global-existence result for all sublinear coefficients.

\subsection*{Index convention}
Throughout the paper we use the following convention for the Jacobi
coefficients. The coefficient \(a_n\) is the off-diagonal entry connecting the
sites \(n\) and \(n+1\). Thus the two-sided Jacobi expression associated with
\(q=\{a_n,b_n\}_{n\in\mathbb Z}\) is
\[
(J_q u)_n
=
a_{n}u_{n+1}+a_{n-1}u_{n-1}+b_nu_n,
\qquad n\in\mathbb Z .
\]

For an integer \(k\), cutting the lattice between \(k-1\) and \(k\) gives the
right half-line Jacobi expression \(J^q_{k,+}\) on
\(\ell^2(\{k,k+1,k+2,\ldots\})\), defined by
\[
(J^q_{k,+}u)_n
=
a_{n}u_{n+1}+a_{n-1}u_{n-1}+b_nu_n,
\qquad n\ge k+1,
\]
with boundary equation
\[
(J^q_{k,+}u)_k
=
a_{k}u_{k+1}+b_ku_k .
\]
Similarly, the left half-line Jacobi expression \(J^q_{k,-}\) acts on
\(\ell^2(\{\ldots,k-2,k-1\})\) by
\[
(J^q_{k,-}u)_n
=
a_{n}u_{n+1}+a_{n-1}u_{n-1}+b_nu_n,
\qquad n\le k-2,
\]
with boundary equation
\[
(J^q_{k,-}u)_{k-1}
=
a_{k-2}u_{k-2}+b_{k-1}u_{k-1}.
\]
The coefficient \(a_{k-1}\) is therefore precisely the coupling between the two
half-line operators \(J^q_{k,-}\) and \(J^q_{k,+}\).

Whenever these half-line expressions are essentially self-adjoint, we denote by
\(\sigma^q_{k,+}\) the spectral measure of the self-adjoint closure of
\(J^q_{k,+}\) with respect to the boundary vector \(\delta_k\), and by
\(\sigma^q_{k,-}\) the spectral measure of the self-adjoint closure of
\(J^q_{k,-}\) with respect to the boundary vector \(\delta_{k-1}\). In
particular,
\[
\sigma^q_+ := \sigma^q_{0,+},
\qquad
\sigma^q_- := \sigma^q_{0,-}.
\]

\section{Preliminaries}
\subsection{Compound matrix}
We recall some standard facts about compound matrices; see \cite{RH}. Let $A\in\mathbb{C}^{N\times N}$ and let $1\le d\le N$.  We write
\[
\mathcal{I}_d(N)=\{I=\{i_1<\cdots<i_d\}: I\subset\{1,\ldots,N\}\}.
\]
For $I,J\in\mathcal{I}_d(N)$, let $A[I,J]$ be the submatrix of $A$ obtained by selecting the rows indexed by $I$ and the columns indexed by $J$.

\begin{definition}
	The $d$th multiplicative compound matrix of $A$ is
	\[
	C_d(A)=\bigl(\det A[I,J]\bigr)_{I,J\in\mathcal{I}_d(N)}.
	\]
	The $d$th additive compound matrix is
	\[
	A^{[d]}:= \left. \dfrac{\mathrm{d}}{\mathrm{d}z} C_d(I+zA) \right|_{z=0}.
	\]
\end{definition}

The Cauchy--Binet formula gives
\[
C_d(AB)=C_d(A)C_d(B).
\]
Consequently, $z\mapsto C_d(e^{zA})$ is a one-parameter matrix group whose derivative at $z=0$ is $A^{[d]}$. Hence
\begin{equation}
	C_d(e^{zA})=e^{zA^{[d]}},\qquad z\in\mathbb{C}.
	\label{2.1}
\end{equation}

We shall use the following entry formula.

\begin{lemma}\label{l3}
	Let $A=(a_{ij})_{i,j=1}^N$ and $I=\{i_1<\cdots<i_d\}$, $J=\{j_1<\cdots<j_d\}$.
	Then
	\begin{equation}
		(A^{[d]})_{I,I}=\sum_{r=1}^d a_{i_ri_r}.
		\label{2.2}
	\end{equation}
	If $|I\setminus J|\ge2$, then $(A^{[d]})_{I,J}=0$.  If
	\[
	I\setminus\{i_r\}=J\setminus\{j_s\},
	\]
	then
	\begin{equation}
		(A^{[d]})_{I,J}=(-1)^{r+s}a_{i_rj_s}.
		\label{2.3}
	\end{equation}
	In particular, if $A$ is real symmetric, then $A^{[d]}$ is real symmetric.
\end{lemma}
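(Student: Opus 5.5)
We compute $(A^{[d]})_{I,J}$ directly from the definition, $\frac{d}{dz}\det\bigl((I+zA)[I,J]\bigr)\big|_{z=0}$, by expanding the minor as a polynomial in $z$ and keeping only the linear coefficient. Write $M(z)=(I+zA)[I,J]$, so $M(z)_{pq}=\delta_{i_pj_q}+z\,a_{i_pj_q}$. By multilinearity of the determinant in the rows, $\frac{d}{dz}\det M(z)|_{z=0}$ is the sum over $r=1,\dots,d$ of the determinant of the matrix obtained from $M(0)=I_N[I,J]$ by replacing its $r$-th row with the $r$-th row of $A[I,J]$. So everything reduces to understanding the $0/1$ matrix $E:=I_N[I,J]$, whose $(p,q)$ entry is $1$ exactly when $i_p=j_q$.

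For the diagonal case $I=J$ we have $E=I_d$. Replacing row $r$ by $(a_{i_rj_1},\dots,a_{i_rj_d})$ gives a determinant equal to the diagonal entry $a_{i_ri_r}$; expand along the other rows, which are unit vectors. Summing over $r$ yields \eqref{2.2}.

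If $|I\setminus J|\ge2$, then $E$ has at least two zero rows (the rows $p$ with $i_p\notin J$). After replacing a single row, at least one zero row remains, so every term vanishes and $(A^{[d]})_{I,J}=0$.

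The main case is $|I\setminus J|=1$, where $I\setminus\{i_r\}=J\setminus\{j_s\}=:K$. Here $E$ has exactly one zero row, row $r$, and exactly one zero column, column $s$. Only the term that replaces row $r$ survives, because replacing any other row leaves row $r$ zero. The matrix in that term has unit-vector rows for $p\neq r$ and row $r$ equal to $(a_{i_rj_q})_q$. Expanding along column $s$, whose only possibly nonzero entry sits in row $r$, gives $(-1)^{r+s}a_{i_rj_s}$ times the minor $E$ with row $r$ and column $s$ deleted.

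That minor is the matrix matching the increasingly ordered set $K$ with itself, i.e.\ the identity. This holds because deleting $i_r$ from $I$ and $j_s$ from $J$ leaves the same sorted list $K$ in both, so the remaining pairings $i_p=j_q$ occur in order along the diagonal. This is the one step requiring care: it is exactly why the sign is $(-1)^{r+s}$ with $r,s$ the positions in the increasing enumerations. The result is \eqref{2.3}.

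For symmetry, if $A$ is real, the entries above are real. The diagonal formula is symmetric in $I\leftrightarrow J$ trivially. The off-diagonal formula transforms under $I\leftrightarrow J$ (which swaps $r\leftrightarrow s$) into $(-1)^{s+r}a_{j_si_r}$, which equals the original value when $a_{ij}=a_{ji}$. The vanishing condition $|I\setminus J|\ge2$ is also symmetric, since $|I\setminus J|=|J\setminus I|$. Hence $A^{[d]}$ is real symmetric.
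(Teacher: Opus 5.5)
Your proof is correct and follows the same route as the paper: you differentiate the $(I,J)$ minor of $C_d(I+zA)$ at $z=0$ and split into the cases $|I\setminus J|=0$, $1$, or $\ge 2$. You also supply the details the paper leaves implicit: the row-multilinearity expansion, the one-zero-row/one-zero-column structure of $I_N[I,J]$ that produces the sign $(-1)^{r+s}$, and the symmetry check. Along the way you correctly read the one-position case as requiring $I\neq J$.
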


\begin{proof}
	Differentiate the determinant defining the $(I,J)$ entry of $C_d(I+zA)$ at $z=0$.  If $I=J$, the derivative is the sum of the diagonal entries selected by $I$, giving \eqref{2.2}.  If $I$ and $J$ differ in exactly one position, the only nonzero first-order cofactor is the one obtained by deleting the differing row and column, and its sign is $(-1)^{r+s}$, yielding \eqref{2.3}.  If they differ in two or more positions, every term has order at least two in $z$. \bigskip
\end{proof}

\begin{lemma}\label{l4}
	Let
	\[
	K=D+B\in\mathbb{R}^{N\times N},
	\qquad
	D=\operatorname{diag}(v_1,\ldots,v_N),
	\]
	where
	\begin{equation}
		B=B^T,\qquad B_{ii}=0,\qquad B_{ij}\ge0\quad(i\ne j).
		\label{2.4}
	\end{equation}
	Assume that there exists a diagonal signature matrix
	\[
	\Gamma=\operatorname{diag}(\varepsilon_1,\ldots,\varepsilon_N),
	\qquad \varepsilon_i\in\{-1,1\},
	\]
	for which $\Gamma B\Gamma=-B$. Let $\mathcal S\subset\{1,\ldots,N\}$ and let $o\in\mathcal S$.  Denote by
	$K_{\mathcal S}=K[\mathcal S,\mathcal S]$ the corresponding principal submatrix.  Then
	\begin{equation}
		\bigl(K_{\mathcal S}^{2m}\bigr)_{oo}\le \bigl(K^{2m}\bigr)_{oo},
		\qquad m=0,1,2,\ldots.
		\label{2.5}
	\end{equation}
\end{lemma}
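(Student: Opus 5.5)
The approach is to expand $(K^{2m})_{oo}$ as a sum over closed walks and to use the signature $\Gamma$ to show that every walk contributes a nonnegative amount. Write
\[
\bigl(K^{2m}\bigr)_{oo}=\sum_{i_0=o,i_1,\ldots,i_{2m-1},i_{2m}=o}\ \prod_{l=1}^{2m}K_{i_{l-1}i_l},
\]
where each step either stays put, contributing a factor $v_{i}$ from $D$, or jumps from $i$ to $j\ne i$, contributing $B_{ij}\ge0$. The same expansion for $(K_{\mathcal S}^{2m})_{oo}$ is exactly the subsum over walks that stay inside $\mathcal S$. Hence \eqref{2.5} follows once every closed walk of length $2m$ from $o$ to $o$ contributes a nonnegative term, since discarding the walks that leave $\mathcal S$ can only decrease the sum.

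The hard part is that the diagonal entries $v_i$ have arbitrary signs, so individual walks need not be positive. The signature $\Gamma$ is used to fix this. The relation $\Gamma B\Gamma=-B$ means $B_{ij}\neq0$ only when $\varepsilon_i\varepsilon_j=-1$, so the graph of $B$ is bipartite with parts $\{\varepsilon=1\}$ and $\{\varepsilon=-1\}$. Consequently $\Gamma K\Gamma=D-B$.

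A cleaner route than signing individual walks is to group them by their sequence of jumps. Fix the sequence of distinct vertices visited, $o=j_0\to j_1\to\cdots\to j_k=o$, together with the number of diagonal steps taken at each. Summing over all ways of distributing the $2m-k$ diagonal steps among the $k+1$ visits gives
\[
\prod_{l}B_{j_{l-1}j_l}\cdot h_{2m-k}(v_{j_0},\ldots,v_{j_k}),
\]
where $h_r$ is the complete homogeneous symmetric polynomial of degree $r$. By bipartiteness $k$ is even, so $r=2m-k$ is even. Complete homogeneous symmetric polynomials of even degree are nonnegative on all real inputs; this is a known fact, provable from $h_r(x)=\int$ representations or by the generating-function identity $\sum_r h_r t^r=\prod(1-x_it)^{-1}$. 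Therefore each grouped contribution is nonnegative.

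The restricted sum $(K_{\mathcal S}^{2m})_{oo}$ is precisely the sum of those groups whose jump sequences lie in $\mathcal S$. All the other groups are nonnegative, so dropping them gives \eqref{2.5}. The case $m=0$ is trivial, since both sides equal $1$.

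The main obstacle is establishing $h_{2p}(x_1,\ldots,x_n)\ge0$ for arbitrary real $x_i$. I would cite Hunter's theorem, or prove it directly via the identity $h_r(x)=\mathbb E\bigl[(\sum_i x_i\xi_i)^r\bigr]/\bigl(\text{positive constant}\bigr)$ with i.i.d. standard exponential variables $\xi_i$. This identity holds because $\mathbb E\,\xi^{a}=a!$, and the multinomial expansion then reduces $\mathbb E(\sum x_i\xi_i)^r$ to $r!\,h_r(x)$. The nonnegativity follows since $r$ is even, so $(\sum_i x_i\xi_i)^r\ge0$ pointwise. This gives the result without needing to control individual walk signs.
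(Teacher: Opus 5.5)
Your proof is correct and is essentially the paper's argument: you group closed walks by their sequence of off-diagonal jumps, sum the diagonal powers into $h_{2m-k}(v_{j_0},\ldots,v_{j_k})$, prove nonnegativity of even-degree $h$ via the exponential-moment representation (the same identity the paper uses), and note that the restricted sum retains only a subcollection of these nonnegative groups. The only cosmetic difference is how parity is obtained. You show that each closed jump sequence with nonzero weight has even length because the support of $B$ is bipartite; the paper instead kills odd-$\ell$ words using the fact that conjugation by $\Gamma$ fixes the $(o,o)$ entry. Your chain-by-chain version has the small advantage of applying verbatim to $K_{\mathcal S}$.
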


\begin{proof}
	Since $D$ and $B$ need not commute, we use the exact noncommutative expansion
	\begin{equation}
		K^{2m}
		=\sum_{\ell=0}^{2m}
		\sum_{\substack{k_0+\cdots+k_\ell=2m-\ell\\k_j\ge0}}
		D^{k_0}BD^{k_1}B\cdots BD^{k_\ell}.
		\label{2.6}
	\end{equation}
	Every word in $D$ and $B$ of length $2m$ occurs exactly once in \eqref{2.6}.
	
	Because $D$ commutes with $\Gamma$ and $B$ anticommutes with $\Gamma$, one has
	\[
	\Gamma\bigl(D^{k_0}BD^{k_1}\cdots BD^{k_\ell}\bigr)\Gamma
	=(-1)^\ell D^{k_0}BD^{k_1}\cdots BD^{k_\ell}.
	\]
	The $(o,o)$ entry is unchanged under conjugation by $\Gamma$.  Hence every term with odd $\ell$ has zero $(o,o)$ entry.
	
	Fix an even $\ell$ and indices
	\[
	i_0=o,\quad i_1,\ldots,i_{\ell-1},\quad i_\ell=o.
	\]
	For fixed $k_0,\ldots,k_\ell$, the contribution of this index chain equals
	\begin{equation}
		\left(\prod_{r=1}^{\ell}B_{i_{r-1}i_r}\right)
		\left(\prod_{r=0}^{\ell}v_{i_r}^{k_r}\right).
		\label{2.7}
	\end{equation}
	Summing \eqref{2.7} over all $k_r\ge0$ with
	$k_0+\cdots+k_\ell=2m-\ell$ gives
	\begin{equation}
		\left(\prod_{r=1}^{\ell}B_{i_{r-1}i_r}\right)
		h_{2m-\ell}(v_{i_0},\ldots,v_{i_\ell}),
		\label{2.8}
	\end{equation}
	where
	\[
	h_q(x_0,\ldots,x_\ell)
	=\sum_{k_0+\cdots+k_\ell=q}x_0^{k_0}\cdots x_\ell^{k_\ell}
	\]
	is the complete homogeneous polynomial of degree $q$.
	
	For real $x_0,\ldots,x_\ell$ and $r\ge0$,
	\begin{equation}
		h_{2r}(x_0,\ldots,x_\ell)
		=\frac{1}{(2r)!}
		\int_{(0,\infty)^{\ell+1}}
		\left(\sum_{j=0}^{\ell}x_jt_j\right)^{2r}
		e^{-\sum_{j=0}^{\ell}t_j}\,
		\mathrm{d} t_0\cdots \mathrm{d} t_\ell
		\ge0. \label{2.9}
	\end{equation}
	Indeed, expanding the even power and using
	$\int_0^\infty t^ke^{-t} d t=k!$ gives \eqref{2.9}.
	Since $\ell$ and $2m$ are even, $2m-\ell$ is even.  Thus every grouped contribution \eqref{2.8} is nonnegative by \eqref{2.4} and \eqref{2.9}.
	
	The expansion of $(K_{\mathcal S}^{2m})_{oo}$ is identical, except that all intermediate indices $i_1,\ldots,i_{\ell-1}$ are restricted to $\mathcal S$.  Therefore it retains only a subcollection of the nonnegative grouped contributions occurring in $(K^{2m})_{oo}$, which proves \eqref{2.5}. \bigskip
\end{proof}

\subsection{Finite Jacobi matrix}
Fix integers $L<R$.  A finite Toda solution on $[L,R]$ consists of
\[
\widehat a_k(\tau),\quad L\le k\le R-1,
\qquad
\widehat b_k(\tau),\quad L\le k\le R,
\]
satisfying
\begin{equation}
	\left\{
	\begin{array}
		[c]{l}%
		\dot{\widehat a}_k=\widehat a_k(\widehat b_{k+1}-\widehat b_k),
		\qquad L\le k\le R-1, \\
		\dot{\widehat b}_k=2(\widehat a_k^2-\widehat a_{k-1}^2),
		\qquad L\le k\le R,
	\end{array}
	\right. \label{2.10}
\end{equation}
with open-boundary convention
\begin{equation}
	\widehat a_{L-1}=\widehat a_R=0.
	\label{2.11}
\end{equation}
Its Jacobi matrix is
\[
\widehat J(\tau)=
\begin{pmatrix}
	\widehat b_L&\widehat a_L&&0\\
	\widehat a_L&\widehat b_{L+1}&\cdots&\\
	&\cdots&\cdots&\widehat a_{R-1}\\
	0&&\widehat a_{R-1}&\widehat b_R
\end{pmatrix}.
\]

We use the standard QR representation of the finite Toda flow \cite{WS}. The integrability of the Toda lattice gives that
\begin{equation}
	\widehat J(\tau)=Q(\tau)^T J_0 Q(\tau), \quad \text{with } J_{0} = \widehat{J}(0),
	\label{2.12}
\end{equation}
with a unitary matrix $Q(\tau)$ satisfying
\[
\dot{Q}(\tau) = -Q(\tau)B(\tau), \quad Q(0) = I
\]
where $B(\tau)$  is the skew-symmetrization of $\widehat{J}(\tau)$. Let $R(\tau)$ be the solution to
\[
\dot{R}(\tau) = \left( \widehat{J}(\tau)+B(\tau) \right) R(\tau), \quad R(0) = I.
\]
Then $R(\tau)$ is a upper triangular matrix with positive diagonal. Moreover, $Q(\tau)R(\tau)$ satisfies
\[
\dfrac{\mathrm{d}}{\mathrm{d}\tau} Q(\tau)R(\tau) = \dot{Q}(\tau)R(\tau) + Q(\tau)\dot{R}(\tau) = Q(\tau) \widehat{J}(\tau) R(\tau) = J_{0} Q(\tau)R(\tau),
\]
which implies $e^{\tau J_0}=Q(\tau)R(\tau)$. Differentiating $e^{\tau J_0}=Q(\tau)R(\tau)$ gives
$R'R^{-1}=\widehat J-Q^TQ'$.  The left-hand side is upper triangular, $\widehat J$ is symmetric, and $Q^TQ'$ is skew-symmetric, so its diagonal equals that of $\widehat J$, which yields
\begin{equation}
	\frac{\mathrm{d}}{\mathrm{d}\tau}\log r_k(\tau)=\widehat b_k(\tau).
	\label{2.13}
\end{equation}
where $r_k(\tau)$ denotes the diagonal entry of $R(\tau)$ corresponding to the site $k$.

Fix $n$ with $L\le n<R$ and put
\[
I_0=\{L,L+1,\ldots,n\},
\qquad d=n-L+1.
\]
The set $I_0$ indexes the leading $d\times d$ principal block. Note that
\[
e^{2\tau J_{0}} = \left( e^{\tau J_{0}} \right)^{T} e^{\tau J_{0}} = R(\tau)^{T} Q(\tau)^{T} Q(\tau) R(\tau) = R(\tau)^{T} R(\tau).
\]
Since $R(\tau)$ is upper triangular,
\begin{equation}
	\det\bigl(e^{2\tau J_0}[I_0,I_0]\bigr)
	=\prod_{k=L}^{n}r_k(\tau)^2.
	\label{2.14}
\end{equation}
Thus
\begin{equation}
	\frac{\mathrm{d}}{\mathrm{d}\tau}
	\log\det\bigl(e^{2\tau J_0}[I_0,I_0]\bigr)
	=2\sum_{k=L}^{n}\widehat b_k(\tau).
	\label{2.15}
\end{equation}

\subsection{The Laplace transform}
Recall that the bilateral Laplace transform is defined by
\[
\mathcal{L}_\mu(z)=\int_{\mathbb{R}}e^{xz}\mu(\mathrm{d} x),\qquad z\in\mathbb{C}
\]
for $\mu\in\mathcal P_{\exp}(\mathbb{R})$. A metric on $\mathcal P_{\exp}(\mathbb{R})$ is given by
\[
d_{\mathcal{L}}(\mu,\nu)
:=\sum_{m=1}^{\infty}2^{-m}
\vartheta\!\left(p_m(\mathcal{L}_\mu-\mathcal{L}_\nu)\right)
\]
with $p_R(f):=\sup_{|z|\le R}|f(z)|$ and $\vartheta(s)=\frac{s}{1+s}$.

\begin{lemma}\label{l6}
	Let $\{\mu_j\}_{j\geq1}$ be a sequence of probability measures on
	$\mathbb R$ such that, for some constants $c,C>0$,
	\begin{equation}
		\int_{\mathbb R}e^{c|\lambda|}\mu_j(\mathrm d\lambda)\leq C,
		\qquad j\geq1. \label{2.16}
	\end{equation}
	Suppose that there exists a probability measure $\mu$ on $\mathbb R$ such
	that, for every integer $k\geq0$,
	\begin{equation}
		\int_{\mathbb R}\lambda^k\mu_j(\mathrm d\lambda)
		\longrightarrow
		\int_{\mathbb R}\lambda^k\mu(\mathrm d\lambda). \label{2.17}
	\end{equation}
	Then
	\begin{equation}
		\int_{\mathbb R}e^{c|\lambda|}\mu(\mathrm d\lambda)\leq C,
		\qquad
		\sup_{|z|\leq c'}
		|\mathcal L_{\mu_j}(z)-\mathcal L_\mu(z)|\longrightarrow0
		\quad (0<c'<c). \label{2.18}
	\end{equation}
\end{lemma}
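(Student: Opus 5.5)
The plan is to prove both conclusions of Lemma \ref{l6} by combining tightness, the uniqueness of the limit forced by moment convergence, and uniform integrability coming from the exponential bound \eqref{2.16}.

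\emph{Step 1: weak convergence.} By \eqref{2.16} and Chebyshev, $\mu_j(|\lambda|>M)\le Ce^{-cM}$, so the sequence is tight. Take any subsequence and extract a further subsequence converging weakly to some probability measure $\nu$. For fixed $k$, $|\lambda|^{k+1}\le C_k e^{c|\lambda|}$, so $\sup_j\int|\lambda|^{k+1}\,d\mu_j<\infty$. Hence $\lambda^k$ is uniformly integrable along the sequence, and $\int\lambda^k\,d\mu_j\to\int\lambda^k\,d\nu$ along the sub-subsequence. By \eqref{2.17}, $\nu$ and $\mu$ have the same moments.

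\emph{Step 2: moment determinacy.} By Fatou's lemma (lower semicontinuity of $\int f\,d\nu$ for $f=e^{c|\lambda|}$, which is nonnegative and continuous, under weak convergence) we get $\int e^{c|\lambda|}\,d\nu\le C$. Therefore $\nu$ has finite exponential moments, and its moment problem is determinate, e.g.\ by Carleman's condition or because its characteristic function is analytic in a strip. Since $\mu$ has the same moments as $\nu$, $\mu=\nu$. This is the one delicate point: we must not assume $\mu$ itself is determinate, only $\nu$, and we get determinacy of $\nu$ directly from the Fatou bound. As every subsequence has a further subsequence converging weakly to $\mu$, the whole sequence satisfies $\mu_j\to\mu$ weakly. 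The first half of \eqref{2.18} then follows from the same Fatou argument applied to $\mu$.

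\emph{Step 3: uniform convergence of Laplace transforms.} For $|z|\le c'<c$ we have $|e^{\lambda z}|\le e^{c'|\lambda|}$. Fix $M$ and split each integral into $|\lambda|\le M$ and $|\lambda|>M$.
\begin{itemize}
\item On the tail,
\[
\int_{|\lambda|>M} e^{c'|\lambda|}\,d\mu_j\le e^{-(c-c')M}\,C,
\]
and the same bound holds for $\mu$. This is uniform in $j$ and in $z$.
\item On the compact part, use a continuous cutoff $\chi_M$ equal to $1$ on $[-M,M]$ and supported in $[-M-1,M+1]$, so that the truncation error is absorbed into the tail estimate above. The family $\{\chi_M(\lambda)e^{\lambda z}:|z|\le c'\}$ is uniformly bounded and equicontinuous in $z$, and is compactly supported.
\end{itemize}
Weak convergence gives pointwise convergence in $z$ of $\int\chi_M e^{\lambda z}\,d\mu_j$. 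Equicontinuity in $z$, via the uniform Lipschitz bound $|\lambda|e^{c'|\lambda|}\le (M+1)e^{c'(M+1)}$ on the support, upgrades this to uniform convergence on the compact disk $|z|\le c'$. Letting $j\to\infty$ and then $M\to\infty$ yields the second half of \eqref{2.18}.

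The main obstacle is Step 2, where the identification of the weak limit relies only on determinacy of $\nu$ derived from the exponential bound, not on any a priori property of $\mu$. The rest is routine.
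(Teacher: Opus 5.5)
Your proof is correct, but it works on the measure side, while the paper works on the transform side.

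The paper applies Montel's theorem to the $\mathcal L_{\mu_j}$, which are analytic and locally uniformly bounded on the strip $S_c=\{|\operatorname{Re}z|<c\}$. Any subsequential limit $f$ then has $f^{(k)}(0)=\int\lambda^k\,\mu(\mathrm d\lambda)$ by Cauchy's formula and \eqref{2.17}. Fatou's lemma on the even-moment series $\sum_m\frac{(c'')^{2m}}{(2m)!}\int\lambda^{2m}\,\mathrm d\mu$ shows that $\mathcal L_\mu$ is analytic on $S_c$, and the identity theorem gives $f=\mathcal L_\mu$. Weak convergence (via characteristic functions) and the bound $\int e^{c|\lambda|}\,\mathrm d\mu\le C$ (via Portmanteau) come only at the end.

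You instead use Prokhorov compactness and uniform integrability to pass moments to a weak limit $\nu$. You then use lower semicontinuity to bound the exponential moment of $\nu$, Carleman determinacy to get $\nu=\mu$, and a cutoff plus equi-Lipschitz argument for uniform convergence on $|z|\le c'$.

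The paper's route lets the identity theorem do the work of moment determinacy, so it never needs Carleman's criterion. Yours is more elementary and real-variable, and it gives the explicit estimate $\limsup_j\sup_{|z|\le c'}|\mathcal L_{\mu_j}-\mathcal L_\mu|\le 2Ce^{-(c-c')M}$ for each $M$.

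One remark: the step you flag as delicate is milder than you suggest. The paper's Fatou-on-moment-series step gives $\int\cosh(c''\lambda)\,\mu(\mathrm d\lambda)\le\liminf_j\int\cosh(c''\lambda)\,\mu_j(\mathrm d\lambda)<\infty$ directly from \eqref{2.17}. So $\mu$ itself is determinate, and the detour through the bound on $\nu$ is optional.
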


\begin{proof}
	The functions $\mathcal L_{\mu_j}$ are analytic in the strip
	$S_c=\{z\in\mathbb C:|\operatorname{Re}z|<c\}$ and are locally uniformly
	bounded there by \eqref{2.16}. Hence every subsequence contains a further
	subsequence, denoted again by $\mathcal L_{\mu_j}$, which converges locally
	uniformly on $S_c$ to an analytic function $f$. By Cauchy's formula and
	\eqref{2.17},
	\[
	f^{(k)}(0)=\int_{\mathbb R}\lambda^k\mu(\mathrm d\lambda),
	\qquad k\geq0.
	\]
	For $0<c''<c$, Fatou's lemma applied to the even moment series gives
	\[
	\begin{aligned}
		\int_{\mathbb R}\cosh(c''\lambda)\mu(\mathrm d\lambda)
		&=\sum_{m=0}^{\infty}\frac{(c'')^{2m}}{(2m)!}
		\int_{\mathbb R}\lambda^{2m}\mu(\mathrm d\lambda)\\
		&\leq\liminf_{j\to\infty}
		\int_{\mathbb R}\cosh(c''\lambda)\mu_j(\mathrm d\lambda)<\infty.
	\end{aligned}
	\]
	Thus $\mathcal L_\mu$ is analytic on $S_c$, and the equality of all
	derivatives at zero implies $f=\mathcal L_\mu$ there. Since every convergent
	subsequence has the same limit, the whole sequence converges locally
	uniformly on $S_c$, which proves the second assertion in \eqref{2.18}.
	
	In particular, the characteristic functions converge pointwise, so
	$\mu_j$ converges weakly to $\mu$. The first assertion in \eqref{2.18}
	now follows from \eqref{2.16} and the Portmanteau theorem. \bigskip
\end{proof}

We shall also use the standard correspondence between probability measures
with infinite support, orthogonal polynomials, and half-line Jacobi matrices;
see, for example, \cite[Chapters I--II]{TC}.

\begin{lemma}\label{l7}
	The convergence $d(q_j,q)\to0$ holds for $q_j,q\in\mathcal Q$ if and only if
	\begin{equation}
		\sup_j\int_{\mathbb R}e^{c|\lambda|}
		\sigma_\pm^{q_j}(\mathrm d\lambda)<\infty
		\qquad\text{for every }c>0, \label{3.23}
	\end{equation}
	and
	\begin{equation}
		a_n(q_j)\longrightarrow a_n(q),\qquad
		b_n(q_j)\longrightarrow b_n(q)
		\qquad\text{for every }n\in\mathbb Z. \label{3.24}
	\end{equation}
\end{lemma}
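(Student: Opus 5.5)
We prove the two implications separately. The main tool is the standard correspondence between probability measures with all moments finite and half-line Jacobi coefficients. The moments determine the coefficients through polynomial expressions, because orthogonal polynomials arise from Gram--Schmidt applied to $1,\lambda,\lambda^2,\dots$. Conversely, the moments are polynomials in finitely many coefficients: $\int\lambda^k\sigma^q_+(\mathrm d\lambda)=\langle (J^q_{0,+})^k\delta_0,\delta_0\rangle$ depends only on $a_0,\dots,a_{k-1}$ and $b_0,\dots,b_k$. The analogous statement holds for $\sigma^q_-$ with $a_{-2},a_{-3},\dots$ and $b_{-1},b_{-2},\dots$. The coupling coefficient $a_{-1}$ appears in neither measure, which is why it enters \eqref{1.6} separately.

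\emph{($\Rightarrow$).} Suppose $d(q_j,q)\to0$. Then $a_{-1}(q_j)\to a_{-1}(q)$ directly. Moreover $p_m(\mathcal L_{\sigma^{q_j}_\pm}-\mathcal L_{\sigma^q_\pm})\to0$ for every $m$. Fix $c>0$ and choose $m\ge c$. Then $\int e^{c|\lambda|}\sigma(\mathrm d\lambda)\le \mathcal L_\sigma(c)+\mathcal L_\sigma(-c)$, and these values converge, so they are uniformly bounded in $j$. This gives \eqref{3.23}. By Cauchy's formula on $|z|\le1$, the derivatives at $0$ converge, so every moment of $\sigma^{q_j}_\pm$ converges to the corresponding moment of $\sigma^q_\pm$.

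It remains to pass from moments to coefficients. Since $\sigma^q_\pm$ has infinite support, the Hankel determinants $D_k=\det(s_{i+l})_{i,l=0}^{k}$ of $\sigma^q_\pm$ are positive. The coefficients are continuous functions of the moments wherever the relevant Hankel determinants are positive. Explicitly, $a_k^2=D_{k-1}D_{k+1}/D_k^2$, and $b_k$ is a ratio of Hankel-type minors over $D_k$, with the index shift appropriate to each half-line. Since the $D_k(q_j)$ converge to $D_k(q)>0$, they are eventually positive, and \eqref{3.24} follows.

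\emph{($\Leftarrow$).} Suppose \eqref{3.23} and \eqref{3.24} hold. The moments of $\sigma^{q_j}_\pm$ are fixed polynomials in finitely many coefficients, so \eqref{3.24} gives \eqref{2.17} with $\mu=\sigma^q_\pm$. Given $m$, apply Lemma \ref{l6} with $c=m+1$ and $c'=m$, using \eqref{3.23} to supply \eqref{2.16}. This yields $p_m(\mathcal L_{\sigma^{q_j}_\pm}-\mathcal L_{\sigma^q_\pm})\to0$ for every $m$. Since $\vartheta\le1$ and $\sum_m 2^{-m}$ converges, dominated convergence for the series \eqref{1.5} gives $d_{\mathcal L}\to0$. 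Together with $a_{-1}(q_j)\to a_{-1}(q)$ this gives $d(q_j,q)\to0$.

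The point needing care is the continuity of the inverse map from moments to coefficients in ($\Rightarrow$). I would settle it by writing the coefficients as rational functions of the moments whose denominators are Hankel determinants, and by checking the index convention carefully for the left measure $\sigma^q_-$. Everything else is handled by Lemma \ref{l6} and the finite dependence of the moments on the coefficients.
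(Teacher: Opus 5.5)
Your proposal is correct and follows essentially the same route as the paper. In the forward direction you evaluate the Laplace transforms at $\pm c$, get moment convergence from Cauchy's formula, and invert via Hankel determinants. In the converse you use the finite polynomial dependence of the moments on the coefficients together with Lemma~\ref{l6}. Your explicit formula $a_k^2=D_{k-1}D_{k+1}/D_k^2$, the infinite-support positivity of $D_k$, and the dominated-convergence step for the series \eqref{1.5} fill in details the paper leaves implicit.
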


\begin{proof}
	Suppose first that $d(q_j,q)\to0$. Then the boundary Laplace transforms
	converge locally uniformly. Evaluating them at $\pm c$ gives \eqref{3.23},
	while Cauchy's formula gives convergence of every moment. The Jacobi
	coefficients are continuous functions of the Hankel determinants of these
	moments, and therefore \eqref{3.24} follows on each half-line; the coupling
	coefficient $a_{-1}$ converges by the definition of $d$.
	
	Conversely, suppose that \eqref{3.23}--\eqref{3.24} hold. Every boundary
	moment is a polynomial in finitely many Jacobi coefficients, so
	\eqref{3.24} implies convergence of all moments of
	$\sigma_\pm^{q_j}$ to those of $\sigma_\pm^q$. For any $R>0$, apply Lemma
	\ref{l6} with some $c>R$ to obtain
	\[
	p_R\left(\mathcal L_{\sigma_\pm^{q_j}}
	-\mathcal L_{\sigma_\pm^q}\right)\longrightarrow0.
	\]
	Together with $a_{-1}(q_j)\to a_{-1}(q)$, this is exactly
	$d(q_j,q)\to0$. \bigskip
\end{proof}

\section{Exponential integrability of spectral measures}
In this section, our primary goal is to prove that if a doubly global classical solution $q(t) = \{ a_{n}(t),b_{n}(t) \}_{n \in \mathbb{Z}}$ to the Toda lattice exists, then $q(t)$ must belong to $\mathcal{Q}$ for every time $t$, and the solution is unique if it exists.

Fix a global time $t_0\in\mathbb{R}$, a cut $n\in\mathbb{Z}$, and finite endpoints $L<n<R$. Let
\[
J_0=J_{L,R}(t_0)
\]
be the finite Jacobi matrix on $[L,R]$ with
\[
\widehat b_k(0)=b_k(t_0),\qquad L\leq k\leq R,
\quad\text{and}\quad
\widehat a_k(0)=a_{k}(t_0),\qquad L\leq k<R.
\]
Let $\widehat J(\tau)$ be the finite Toda solution generated by $J_0$.
Define
\[
\Theta_{L,R,n,t_0}(\tau)
=\exp\left(-2\tau\sum_{k=L}^{n-1}b_k(t_0)\right)
\det\bigl(e^{2\tau J_0}[I_0,I_0]\bigr).
\]
It is strictly positive and satisfies
\begin{equation}
	\Theta_{L,R,n,t_0}(0)=1,
	\qquad
	(\log\Theta_{L,R,n,t_0})'(0)=2b_n(t_0),
	\label{3.2}
\end{equation}
while \eqref{2.15} and \eqref{2.11} give
\begin{equation}
	(\log\Theta_{L,R,n,t_0})''(\tau)
	=2\sum_{k=L}^{n}\dot{\widehat b}_k(\tau)
	=4\sum_{k=L}^{n}\bigl(\widehat a_k(\tau)^2-\widehat a_{k-1}(\tau)^2\bigr)
	=4\widehat a_n(\tau)^2.
	\label{3.3}
\end{equation}

The compound-matrix representation of $\Theta$ is crucial. By
\eqref{2.1},
\[
\det\bigl(e^{2\tau J_0}[I_0,I_0]\bigr)
=\bigl(C_d(e^{2\tau J_0})\bigr)_{I_0,I_0}
=\bigl(e^{2\tau J_0^{[d]}}\bigr)_{I_0,I_0}.
\]
Let $e_{I_0}$ denote the coordinate vector corresponding to $I_0$ and set
\begin{equation}
	K_{L,R,n,t_0}
	=2J_0^{[d]}-2\left(\sum_{k=L}^{n-1}b_k(t_0)\right)I.
	\label{3.4}
\end{equation}
Then
\[
\Theta_{L,R,n,t_0}(\tau) = \left\langle e_{I_0},e^{\tau K_{L,R,n,t_0}}e_{I_0} \right\rangle.
\]
In particular,
\[
\Theta_{L,R,n,t_0}^{(r)}(0)=\left\langle e_{I_0},K_{L,R,n,t_0}^r e_{I_0} \right\rangle.
\]
Since $K_{L,R,n,t_0}$ is real symmetric, for every $M\ge0$ the Hankel matrix
\[
\left[\Theta_{L,R,n,t_0}^{(p+q)}(0)\right]_{p,q=0}^M
\]
is positive semidefinite. Indeed, for $c_0,\cdots,c_M\in\mathbb{C}$,
\[
\sum_{p,q=0}^{M}\overline{c_p}c_q
\Theta_{L,R,n,t_0}^{(p+q)}(0)
=\left\Vert \sum_{p=0}^{M}c_pK_{L,R,n,t_0}^pe_{I_0} \right\Vert^{2} \ge0.
\]

We now pass from finite Taylor data to the global lattice.  Because the right-hand sides of Toda lattice are local polynomials, every coordinate of a classical solution is $C^\infty$ in time, and each fixed-order time derivative at a site depends only on finitely many coefficients in a neighborhood of that site.

\begin{lemma}\label{l10}
	For every $r\ge0$ there exists an integer $N_r$ such that the values
	$a_n^{(r)}(t_0)$ and $b_n^{(r)}(t_0)$ are universal polynomials in
	\[
	\{a_j(t_0),b_j(t_0): |j-n|\le N_r\}.
	\]
	Consequently, for each $M$, if $L$ and $R$ are sufficiently far from $n$, then the finite solution generated by $J_{L,R}(t_0)$ and the global solution have identical time derivatives at the relevant sites through every order needed to compute the first $M$ Taylor coefficients below.
\end{lemma}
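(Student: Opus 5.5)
The plan is to prove the first assertion by induction on $r$, using only the structure of \eqref{1.1}. For $r=0$ the claim is trivial with $N_0=0$. Assume that $a_j^{(r)}(t_0)$ and $b_j^{(r)}(t_0)$ are universal polynomials $P_r,Q_r$ (independent of $j$ up to index shift) in the coefficients within distance $N_r$ of $j$.

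Since the solution is classical and \eqref{1.1} has polynomial right-hand sides, a bootstrap argument shows that every coordinate is $C^\infty$. Indeed, if all coordinates are $C^k$, then the right-hand sides are $C^k$, hence all coordinates are $C^{k+1}$. We may therefore differentiate \eqref{1.1} $r$ times and apply Leibniz's rule:
\[
a_n^{(r+1)}=\sum_{i=0}^{r}\binom{r}{i}a_n^{(i)}\bigl(b_{n+1}^{(r-i)}-b_n^{(r-i)}\bigr),
\qquad
b_n^{(r+1)}=2\sum_{i=0}^{r}\binom{r}{i}\bigl(a_n^{(i)}a_n^{(r-i)}-a_{n-1}^{(i)}a_{n-1}^{(r-i)}\bigr).
\]
Every derivative on the right is of order at most $r$ and sits at one of the sites $n-1,n,n+1$. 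By the induction hypothesis it is a polynomial in the coefficients within distance $N_r+1$ of $n$. So $N_{r+1}=N_r+1$ works, giving $N_r=r$. The polynomials are universal because the recursion does not depend on $n$ or on the solution.

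For the consequence, note that the finite Toda system \eqref{2.10} with the open-boundary convention \eqref{2.11} has the same local form as \eqref{1.1}. Its solution is real analytic, since it is a finite ODE with polynomial vector field, and it is defined for all $\tau$ because the flow is isospectral and the coefficients stay bounded. Hence the same recursion gives $\widehat a_k^{(r)}(0)$ and $\widehat b_k^{(r)}(0)$ as the same universal polynomials in $\{\widehat a_j(0),\widehat b_j(0)\}$. The only difference from the global case is that any coefficient with index outside $[L,R]$, or $\widehat a_{L-1}$ and $\widehat a_R$, is replaced by $0$.

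If $k-N_r>L$ and $k+N_r<R-1$, every coefficient entering the polynomial for site $k$ is an interior one. These coincide with the global values $a_j(t_0),b_j(t_0)$ by the choice of $J_0=J_{L,R}(t_0)$. This is the reason for shifting time: the global solution at time $t_0+\tau$ and the finite solution at time $\tau$ have the same derivatives at $\tau=0$.

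It remains to identify the "relevant sites". By \eqref{3.3}, $\Theta^{(r)}(0)$ for $r\le M$ is determined by $\Theta(0)$, $(\log\Theta)'(0)=2b_n(t_0)$, and the derivatives of $4\widehat a_n^2$ up to order $M-2$ at site $n$. So it suffices to take $L<n-N_M-1$ and $R>n+N_M+1$. The same margin handles any fixed finite set of sites near $n$.

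The main point needing care is this bookkeeping: ensuring the boundary convention \eqref{2.11} never enters the recursion at the sites used. That is guaranteed by the explicit linear growth $N_r=r$ of the dependence radius.
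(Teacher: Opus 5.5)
Your proposal is correct and follows the same route as the paper. The paper justifies this lemma only by the one-line remark that the Toda right-hand sides are local polynomials, so classical solutions are $C^\infty$ and each derivative depends on finitely many neighbours; your bootstrap, the Leibniz induction giving $N_r=r$, and the comparison with the open-boundary finite system away from $L$ and $R$ are a correct, fleshed-out version of that remark.
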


Define the global auxiliary function
\[
F_{n,t_0}(\tau)
=\exp\left(
2\tau b_n(t_0)
+4\int_0^{\tau}(\tau-s)a_{n}(t_0+s)^2 \mathrm{d}s
\right),
\qquad \tau\in\mathbb{R}.
\]
Thus
\[
F_{n,t_0}(0)=1,
\qquad
(\log F_{n,t_0})'(0)=2b_n(t_0),
\qquad
(\log F_{n,t_0})''(\tau)=4a_{n}(t_0+\tau)^2.
\]
Combining \eqref{3.2}, \eqref{3.3}, and Lemma \ref{l10}, we obtain: for every $M$ and all sufficiently distant $L,R$,
\begin{equation}
	\Theta_{L,R,n,t_0}^{(r)}(0)=F_{n,t_0}^{(r)}(0),
	\qquad 0\le r\le 2M.
	\label{3.5}
\end{equation}
Therefore
\begin{equation}
	\left[F_{n,t_0}^{(p+q)}(0)\right]_{p,q=0}^{M}\ge0,
	\qquad M=0,1,2,\ldots.
	\label{3.6}
\end{equation}

The same positivity holds at every real point. Indeed, for fixed $x\in\mathbb{R}$, the two functions
$h\mapsto F_{n,t_0}(x+h)$ and $h\mapsto F_{n,t_0 +x}(h)$ have the same second logarithmic derivative.  Hence there exists $\gamma\in\mathbb{R}$ such that
\begin{equation}
	F_{n,t_0}(x+h)=F_{n,t_0}(x)e^{\gamma h}F_{n,t_0+x}(h).
	\label{3.7}
\end{equation}
If $G(h)=e^{\gamma h}H(h)$ and
\[
T_{pr}=\begin{cases}
	\binom{p}{r}\gamma^{p-r},&r\le p,\\
	0,&r>p,
\end{cases}
\]
then the Leibniz and Vandermonde identities give
\begin{equation}
	[G^{(p+q)}(0)]_{p,q=0}^{M}
	=T[H^{(p+q)}(0)]_{p,q=0}^{M}T^T.
	\label{3.8}
\end{equation}
Using \eqref{3.7}, \eqref{3.6}, and
\eqref{3.8}, we conclude that
\begin{equation}
	\left[F_{n,t_0}^{(p+q)}(x)\right]_{p,q=0}^{M}\ge0
	\quad\text{for every }x\in\mathbb{R}\text{ and }M\ge0.
	\label{3.9}
\end{equation}

Generally we record a self-contained version of the exponentially convex function theorem.

\begin{lemma}\label{l11}
	Let $F\in C^\infty(\mathbb{R})$ satisfy $F(0)=1$ and
	\begin{equation}
		[F^{(p+q)}(x)]_{p,q=0}^{M}\ge0
		\qquad (x\in\mathbb{R},\ M\ge0).
		\label{3.10}
	\end{equation}
	Then there exists a probability measure $\rho$ on $\mathbb{R}$ such that
	\begin{equation}
		F(x)=\int_{\mathbb{R}}e^{2x\lambda}\rho(\mathrm{d}\lambda),
		\qquad x\in\mathbb{R},
		\label{3.11}
	\end{equation}
	and
	\begin{equation}
		\int_{\mathbb{R}}e^{c|\lambda|}\rho(\mathrm{d}\lambda)<\infty
		\qquad\text{for every }c>0.
		\label{3.12}
	\end{equation}
\end{lemma}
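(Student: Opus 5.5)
We prove this via Bernstein's theorem on exponentially convex functions, reduced to a Hamburger-moment argument at each point. The key steps are as follows.

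\textbf{Step 1: a local representing measure at every point.} Fix $x\in\mathbb R$. The Hankel matrices $[F^{(p+q)}(x)]_{p,q=0}^M$ are positive semidefinite for all $M$. Hence the sequence $s_k(x)=F^{(k)}(x)$ is a Hamburger moment sequence, and there is a positive measure $\nu_x$ with $F^{(k)}(x)=\int \mu^k\,\nu_x(d\mu)$.

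\textbf{Step 2: exponential moments and the local Laplace representation.} We control the growth of the moments using only smoothness of $F$. The function $F$ is $C^\infty$, so Taylor's formula with remainder for the even-order expansion applies. Every even derivative $F^{(2m)}(y)$ is nonnegative (diagonal entries of \eqref{3.10}). Therefore
\[
\sum_{m\le N}\frac{h^{2m}}{(2m)!}F^{(2m)}(x)\le \tfrac12\bigl(F(x+h)+F(x-h)\bigr)
\]
for every $h>0$. This needs a short argument: the function $g(h)=\tfrac12(F(x+h)+F(x-h))$ is even with $g^{(2m)}(h)\ge0$ for all $h$. By Taylor with integral remainder, each partial sum of its Maclaurin series is bounded by $g(h)$.

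It follows that $\int\cosh(h\mu)\,\nu_x(d\mu)\le g(h)<\infty$ for every $h$. So $\nu_x$ has all exponential moments, and it is determinate. Moreover, $G_x(h):=\int e^{h\mu}\nu_x(d\mu)$ is entire in $h$.

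We then need $G_x(h)=F(x+h)$, not merely equality of Taylor coefficients at $h=0$. This is the main obstacle, since $F$ is only assumed $C^\infty$ and not analytic. Our plan is to run the same Taylor bound with odd remainders controlled by the even ones, via Cauchy--Schwarz on the $2\times2$ Hankel minors: $F^{(2m+1)}(y)^2\le F^{(2m)}(y)F^{(2m+2)}(y)$.

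Applied at the intermediate points $y\in[x-h,x+h]$, and using that the even derivatives are convex functions dominated by their endpoint values, this shows the Taylor remainder $R_N(h)\to0$. Hence $F$ is real analytic near $x$ with radius of convergence $\infty$, and $F(x+h)=G_x(h)$ for all real $h$.

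\textbf{Step 3: one global measure.} Take $x=0$ and set $\rho:=\nu_0\circ(\lambda\mapsto2\lambda)^{-1}$, which is the pushforward under $\mu\mapsto\mu/2$. Then
\[
F(x)=\int e^{x\mu}\nu_0(d\mu)=\int e^{2x\lambda}\rho(d\lambda),
\]
which is \eqref{3.11}. From $F(0)=1$ we get $\rho(\mathbb R)=1$, so $\rho$ is a probability measure.

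Finally,
\[
\int e^{c|\lambda|}\,d\rho\le F(c/2)+F(-c/2)<\infty,
\]
which gives \eqref{3.12}.

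So the only delicate step is the analyticity in Step 2. If the Cauchy--Schwarz remainder control fails, the fallback is Bernstein's classical route. First, positive-definiteness of the kernel $F(x+y)$ follows from \eqref{3.10} by the Widder argument: a continuous function whose Hankel derivative matrices are nonnegative is exponentially convex. Then apply the Bernstein--Widder theorem, and deduce the exponential integrability from finiteness of $F$ on all of $\mathbb R$.
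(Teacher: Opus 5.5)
Your proposal is correct and follows essentially the paper's route: Hamburger's theorem applied to $2^{-k}F^{(k)}(0)$, the even-order Taylor inequality \eqref{3.22} to bound the $\cosh$-moments, and the $2\times2$ Hankel minors \eqref{3.15} to control odd derivatives and prove real analyticity, after which $F$ is identified with the entire Laplace transform of $\rho$. The differences are cosmetic: you bound the even derivatives on $[x-h,x+h]$ by convexity, which needs the Taylor bound at the endpoints with a larger step such as $2h$. This gives convergence of the Taylor series at $0$ on all of $\mathbb{R}$ directly, whereas the paper uses uniform bounds on the compact interval $I_h$ and then the identity theorem; also, your Step 1 at points $x\neq 0$ is not needed.
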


\begin{proof}
	From the diagonal entries of \eqref{3.10},
	\begin{equation}
		F^{(2k)}(x)\ge0.
		\label{3.13}
	\end{equation}
	Fix $h>0$.  Taylor's theorem with integral remainder, applied to $F(x+h)$ and $F(x-h)$ through order $2N+1$, and \eqref{3.13} yield
	\begin{equation}
		F(x+h)+F(x-h)
		\ge2\sum_{k=0}^{N}\frac{F^{(2k)}(x)}{(2k)!}h^{2k}.
		\label{3.22}
	\end{equation}
	Consequently,
	\begin{equation}
		F^{(2k)}(x)
		\le \frac{(2k)!}{2h^{2k}}\bigl(F(x+h)+F(x-h)\bigr).
		\label{3.14}
	\end{equation}
	The principal $2\times2$ minor indexed by $k,k+1$ gives
	\begin{equation}
		|F^{(2k+1)}(x)|^2
		\le F^{(2k)}(x)F^{(2k+2)}(x).
		\label{3.15}
	\end{equation}
	To make the analyticity conclusion explicit, fix a compact interval $I$ and choose $h>0$.  On the enlarged compact interval $I_h=\{x:\operatorname{dist}(x,I)\le h\}$, the right-hand side of \eqref{3.14} is uniformly bounded.  Equations \eqref{3.14}--\eqref{3.15} therefore imply
	\[
	\sup_{x\in I}|F^{(j)}(x)|\le C_{I,h}\,j!\,h^{-j},
	\qquad j\ge0,
	\]
	after enlarging $C_{I,h}$ by an absolute factor.  Applying Taylor's theorem on a smaller interval, say $|y-x|<h/2$, makes the remainder bounded by $C_{I,h}2^{-N}$ at order $N$.  Thus $F$ equals its Taylor series locally and is real analytic on $\mathbb{R}$.
	
	Set
	\[
	s_k=2^{-k}F^{(k)}(0).
	\]
	All Hankel matrices $[s_{p+q}]_{p,q=0}^{M}$ are positive semidefinite. By the Hamburger moment theorem, there exists a probability measure $\rho$ such that
	\[
	s_k=\int_{\mathbb{R}}\lambda^k\rho(\mathrm{d}\lambda),
	\qquad k\ge0.
	\]
	Taking $x=0$ and $h=c/2$ in \eqref{3.22}, letting $N\to\infty$, and using monotone convergence gives
	\[
	\int_{\mathbb{R}}\cosh(c\lambda)\rho(\mathrm{d}\lambda)
	=\sum_{k=0}^{\infty}\frac{c^{2k}s_{2k}}{(2k)!}
	\le \frac{F(c/2)+F(-c/2)}{2}<\infty.
	\]
	Since $e^{c|\lambda|}\le2\cosh(c\lambda)$, \eqref{3.12} follows.
	
	The function
	\[
	G(z)=\int_{\mathbb{R}}e^{2\lambda z}\rho(\mathrm{d}\lambda)
	\]
	is entire, by \eqref{3.12}, and satisfies $G^{(k)}(0)=F^{(k)}(0)$ for every $k$.  Since $F$ is real analytic, $F=G$ near zero and hence on all of $\mathbb{R}$.  This proves \eqref{3.11}. \bigskip
\end{proof}

\begin{proof}[Proof of Theorem \ref{p26}]
	Applying Lemma \ref{l11}, there exists a probability measure $\rho_{n,t_{0}}$ satisfying
	\[
	\int_{\mathbb{R}}e^{c|\lambda|}\rho_{n,t_0}(\mathrm{d}\lambda)<\infty,
	\qquad \text{for every } c>0,
	\]
	such that
	\[
	F_{n,t_{0}}(\tau)=\int_{\mathbb{R}}e^{2\lambda\tau}\rho_{n,t_{0}}(\mathrm{d}\lambda).
	\]
	Since $F_n(\tau)=e^{-2\tau b_n(0)}F_{n,0}(\tau)$, translating
	$\rho_{n,0}$ by $-b_n(0)$ gives the measure $\rho_n$ in the statement.
	Thus $F_n$ is entire. Its zeros do not meet the real axis, because
	$F_n(\tau)>0$ for $\tau\in\mathbb R$, and hence
	$4a_n^2=(\log F_n)''$ is meromorphic on $\mathbb C$ and holomorphic on a
	neighborhood of $\mathbb R$. Moreover, the second Toda equation gives
	\[
	b_n(t)=b_n(0)+\frac12(\log F_n)'(t)
	-\frac12(\log F_{n-1})'(t),
	\]
	so $b_n$ has the same meromorphic extension property. \bigskip
\end{proof}

Regarding the relationship between the limit-point property of Jacobi operators or their self-adjointness and $q \in \mathcal{Q}$, we have the following lemma:

\begin{lemma}\label{l12}
	Let $J_+$ be a Jacobi expression on $\ell^2(\mathbb Z_+)$ and let
	\[
	s_k=\left\langle J_+^k\delta_0,\delta_0\right\rangle,
	\qquad k\geq0.
	\]
	Assume that, for some constants $A,c>0$,
	\begin{equation}
		s_{2k}\leq A\frac{(2k)!}{c^{2k}},
		\qquad k\geq0. \label{3.25}
	\end{equation}
	Then $J_+$ is essentially self-adjoint, and its spectral measure is the
	unique probability measure $\sigma_+$ with moments $s_k$. Moreover,
	\[
	\int_{\mathbb R}e^{c'|\lambda|}\sigma_+(\mathrm d\lambda)<\infty
	\qquad\text{for every }0<c'<c.
	\]
\end{lemma}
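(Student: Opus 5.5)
The plan is to reduce everything to the classical Carleman criterion and the determinacy theory of the Hamburger moment problem. The moment bound \eqref{3.25} gives
\[
s_{2k}^{-1/(2k)}\ge A^{-1/(2k)}\,c\,\bigl((2k)!\bigr)^{-1/(2k)}\ge \frac{c'' }{k}
\]
for some $c''>0$ and all large $k$, by Stirling's formula. Hence
\[
\sum_{k}s_{2k}^{-1/(2k)}=\infty,
\]
and Carleman's condition holds.

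\emph{Step 1: essential self-adjointness.} For Jacobi matrices, Carleman's condition implies that the moment problem for $\{s_k\}$ is determinate. I will invoke the standard equivalence, see \cite[Chapters I--II]{TC}: a half-line Jacobi expression is essentially self-adjoint (limit point at infinity) if and only if the Hamburger moment problem for $s_k=\langle J_+^k\delta_0,\delta_0\rangle$ is determinate. Alternatively, one can argue directly via Nelson's analytic vector theorem. By \eqref{3.25}, $\|J_+^k\delta_0\|^2=s_{2k}\le A(2k)!c^{-2k}$, so $\delta_0$ is an analytic vector for the symmetric operator $J_+$ on finitely supported sequences. Each $\delta_j$ has the form $p_j(J_+)\delta_0$ with $\deg p_j=j$ and the leading coefficient nonzero because $a_n>0$. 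Then
\[
\|J_+^k\delta_j\|\le\sum_i |p_{j,i}|\,\|J_+^{k+i}\delta_0\|,
\]
which again grows at most like $C_j(k+j)!\,c^{-k}$. So every $\delta_j$ is an analytic vector, and the analytic vectors span a dense set. Nelson's theorem then yields essential self-adjointness. I prefer this direct route, since it avoids quoting the moment-theoretic equivalence.

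\emph{Step 2: the spectral measure is the unique representing measure.} Let $\bar J_+$ be the closure and $\sigma_+$ its spectral measure at $\delta_0$. Since $\delta_0$ lies in the domain of every power, the moments of $\sigma_+$ are exactly $s_k$. Uniqueness among all probability measures with these moments follows from Carleman's criterion. If one prefers to stay within the paper's tools, it also follows from the next step: any representing measure $\mu$ has $\int\cosh(c'\lambda)\,\mu(d\lambda)<\infty$ by the same computation, so its Laplace transform is analytic in a strip. Two such measures then have equal Laplace transforms near $0$, hence equal characteristic functions by analytic continuation.

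\emph{Step 3: exponential integrability.} By monotone convergence, for $0<c'<c$,
\[
\int\cosh(c'\lambda)\,\sigma_+(d\lambda)=\sum_k\frac{c'^{2k}s_{2k}}{(2k)!}\le A\sum_k (c'/c)^{2k}<\infty.
\]
Since $e^{c'|\lambda|}\le2\cosh(c'\lambda)$, this gives the integrability claim.

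The main obstacle is Step 1, namely justifying that the growth bound at the single vector $\delta_0$ propagates to all $\delta_j$ (or, equivalently, citing the determinacy/limit-point equivalence correctly). The Nelson-type estimate above handles this, since for fixed $j$ the extra factorial shift only costs a polynomial factor in $k$.
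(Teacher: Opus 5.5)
Your proof is correct, but it reaches essential self-adjointness by a different route from the paper's. The paper argues entirely through classical moment theory. It gets a representing measure from Hamburger's theorem and observes that the bound on $s_{2k}$ implies Carleman's condition. It then cites the standard facts that Carleman implies determinacy and that determinacy is equivalent to essential self-adjointness of the minimal Jacobi operator. Exponential integrability is the same $\cosh$-series computation as your Step 3.

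You instead prove essential self-adjointness directly with Nelson's analytic vector theorem, using $\|J_+^k\delta_0\|^2=s_{2k}$ and $\delta_j=P_j(J_+)\delta_0$ (with $P_j$ of exact degree $j$ because $a_n>0$). This route has three consequences:
\begin{itemize}
\item Existence of a representing measure then comes for free from the spectral theorem, since $\delta_0$ lies in the domain of every power of the closure.
\item Uniqueness follows from analyticity of the Laplace transform of any representing measure in the strip $|\operatorname{Re} z|<c$. This is elementary, whereas the paper obtains uniqueness from Carleman's criterion.
\item Your argument is more self-contained on the operator side and bypasses both Hamburger's theorem and the determinacy/limit-point equivalence. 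The paper's version is shorter but rests on those citations.
\end{itemize}

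One cosmetic correction. From $s_{2k}\le A(2k)!\,c^{-2k}$ and $(2k)!\le 4^k(k!)^2$ you get
\[
\|J_+^k\delta_0\|\le\sqrt{A(2k)!}\,c^{-k}\le\sqrt{A}\,k!\,(2/c)^k.
\]
So $\|J_+^k\delta_j\|$ grows like $C_j(k+j)!\,(2/c)^k$, not $C_j(k+j)!\,c^{-k}$ as you wrote. This only shrinks the radius of analyticity from $c$ to $c/2$, which is harmless for Nelson's theorem.
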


\begin{proof}
	The Hankel matrices of $\{s_k\}_{k\geq0}$ are positive semidefinite, since
	\[
	\sum_{p,q=0}^M\overline{\alpha_p}\alpha_qs_{p+q}
	=\left\|\sum_{p=0}^M\alpha_pJ_+^p\delta_0\right\|^2.
	\]
	Hence the Hamburger moment theorem gives a probability measure with these
	moments. Estimate \eqref{3.25} implies Carleman's condition, so the moment
	problem is determinate and the minimal Jacobi operator is essentially
	self-adjoint; see \cite{NA,BS}. Finally,
	\[
	\int_{\mathbb R}\cosh(c'\lambda)\sigma_+(\mathrm d\lambda)
	=\sum_{k=0}^{\infty}\frac{(c')^{2k}s_{2k}}{(2k)!}
	\leq A\sum_{k=0}^{\infty}\left(\frac{c'}c\right)^{2k}<\infty,
	\]
	which proves the last assertion. \bigskip
\end{proof}

We write
\begin{equation}
	r_{2m}(n,t_0)
	:=\int_{\mathbb{R}}\lambda^{2m}\rho_{n,t_0}(\mathrm{d}\lambda)
	=2^{-2m}F_{n,t_0}^{(2m)}(0).
	\label{3.16}
\end{equation}

Let $\mathcal{I}_{d}([L,R])$ denote the $d$-element subsets of $\{L,L+1,\cdots,R\}$.  For
$I=\{i_1<\cdots<i_d\}$ define
\[
\Gamma_{I,I}=(-1)^{i_1+\cdots+i_d}.
\]
Write
\[
K_{L,R,n,t_0}=D+B,
\]
where $D$ is diagonal and $B$ has zero diagonal.  By Lemma \ref{l3}, a nonzero off-diagonal entry of $J_0^{[d]}$ can occur only when one selected index is replaced by an adjacent unselected index.  In that case the replaced index has the same position in the two ordered sets, so the sign in \eqref{2.3} is positive.  Hence
\[
B=B^T,\qquad B_{I,J}\ge0.
\]
Moreover, the sum of the selected indices changes by one, so
\[
\Gamma B\Gamma=-B.
\]
Thus Lemma \ref{l4} applies.

For $n\le k\le R$, define
\[
I_k=\{L,L+1,\ldots,n-1,k\}
\]
and let
\[
\mathcal S=\{I_n,I_{n+1},\ldots,I_R\}.
\]
Using Lemma \ref{l3} and \eqref{3.4}, we obtain
\[
(K_{L,R,n,t_0})_{I_k,I_k}=2b_k(t_0),
\]
and
\[(K_{L,R,n,t_0})_{I_k,I_{k+1}}=2a_{k}(t_0),
\qquad n\le k<R,
\]
and all remaining off-diagonal entries inside $\mathcal S$ vanish.  Therefore
\begin{equation}
	K_{L,R,n,t_0}[\mathcal S,\mathcal S]
	=2J_{[n,R]}(t_0),
	\label{3.17}
\end{equation}
where $J_{[n,R]}(t_0)$ is the finite Jacobi matrix on $[n,R]$.

Since $I_n=I_0$, Lemma \ref{l4}, \eqref{3.5} and \eqref{3.17} imply
\begin{equation}
	2^{2m}\left\langle \delta_n,J_{[n,R]}(t_0)^{2m}\delta_n \right\rangle
	\le
	\left\langle e_{I_0},K_{L,R,n,t_0}^{2m}e_{I_0} \right\rangle
	=\Theta_{L,R,n,t_0}^{(2m)}(0) = F_{n,t_{0}}^{(2m)}(0).
	\label{3.18}
\end{equation}
\begin{lemma}\label{l19}
	For every doubly global classical solution and every $n\in\mathbb Z$,
	$t_0\in\mathbb R$, the right boundary spectral measure satisfies
	\begin{equation}
		\begin{aligned}
			\int_{\mathbb R}\cosh(c\lambda)
			\sigma_{n,+}^{q(t_0)}(\mathrm d\lambda)
			&\leq\int_{\mathbb R}\cosh(c\lambda)
			\rho_{n,t_0}(\mathrm d\lambda)\\
			&=\frac{F_{n,t_0}(c/2)+F_{n,t_0}(-c/2)}2,
			\qquad c>0.
		\end{aligned} \label{3.26}
	\end{equation}
	The analogous assertion holds for the left boundary spectral measure.
\end{lemma}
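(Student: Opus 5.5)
The plan is to pass from the finite-volume inequality \eqref{3.18} to the half-line operator $J^{q(t_0)}_{n,+}$ by letting $R\to\infty$. We then invoke Lemma \ref{l12} to obtain essential self-adjointness and identify the spectral measure. Finally, we sum the even-moment series against $\cosh$.

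First, fix $m$. The entry $\langle \delta_n,J_{[n,R]}(t_0)^{2m}\delta_n\rangle$ is a sum over paths of length $2m$ starting and ending at $n$. Such paths never leave $[n,n+m]$. Hence this entry equals $s_{2m}:=\langle \delta_n,(J^{q(t_0)}_{n,+})^{2m}\delta_n\rangle$ as soon as $R>n+m$, and it is independent of $R$. By Lemma \ref{l10} and \eqref{3.5}, the right side of \eqref{3.18} equals $F^{(2m)}_{n,t_0}(0)$ once $L,R$ are far enough from $n$. Together with \eqref{3.16} this gives
\[
s_{2m}\le 2^{-2m}F^{(2m)}_{n,t_0}(0)=r_{2m}(n,t_0),\qquad m\ge0 .
\]
Since Lemma \ref{l11} applies to $F_{n,t_0}$ by \eqref{3.9}, $\rho_{n,t_0}$ has all exponential moments finite. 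Therefore
\[
r_{2m}(n,t_0)\le \frac{(2m)!}{c^{2m}}\int\cosh(c\lambda)\rho_{n,t_0}(\mathrm d\lambda)
\]
for every $c>0$, which is \eqref{3.25} with any $c$. Lemma \ref{l12} then shows that $J^{q(t_0)}_{n,+}$ is essentially self-adjoint and that $\sigma^{q(t_0)}_{n,+}$ is the unique measure with moments $s_k$.

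Next comes the summation. By monotone convergence,
\[
\int\cosh(c\lambda)\,\sigma^{q(t_0)}_{n,+}(\mathrm d\lambda)=\sum_m \frac{c^{2m}s_{2m}}{(2m)!}\le\sum_m\frac{c^{2m}r_{2m}}{(2m)!}=\int\cosh(c\lambda)\,\rho_{n,t_0}(\mathrm d\lambda).
\]
The final equality in \eqref{3.26} comes from \eqref{3.11} evaluated at $x=\pm c/2$: we have $F_{n,t_0}(\pm c/2)=\int e^{\pm c\lambda}\rho_{n,t_0}(\mathrm d\lambda)$, and we average the two.

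For the left half-line, the same argument is repeated with the reflected block. Apply the construction to the reversed lattice $\tilde a_k=a_{-k-1}$, $\tilde b_k=b_{-k}$. This is again a Toda solution after $t\mapsto -t$, or equivalently after a sign change of $b$; the sign must be checked. Alternatively, use the complementary index set $\{n,\dots,R\}$ with the chain $I'_k=\{k,n+1,\dots,R\}$ for $k\le n$. This reproduces $2J_{[L,n]}$ as a principal submatrix of the corresponding $K$, whose $\Theta$ has second log-derivative $4\widehat a_{n}^2$ by the same telescoping as in \eqref{3.3}. Because of the index convention, the left measure at cut $n$ is based at $\delta_{n-1}$. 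So the relevant coupling is $a_{n-1}$, and the bound will be by the function built from $a_{n-1}$, namely $F_{n-1,t_0}$ possibly with an exponential prefactor. An exponential prefactor only translates $\rho$ and changes the constant, not finiteness.

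The main obstacle is the index and sign bookkeeping in the left case: verifying that the chain $I'_k$ gives off-diagonal entries $+2a_k$ (positive sign in \eqref{2.3}) and that the diagonal shift matches.
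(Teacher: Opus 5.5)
Your right-half-line argument is the paper's proof: stabilization of $\langle\delta_n,J_{[n,R]}(t_0)^{2m}\delta_n\rangle$ in $R$, then $s_{2m}\le r_{2m}(n,t_0)$ from \eqref{3.18} and \eqref{3.5}, then Lemma \ref{l12}, then monotone summation. Bounding $r_{2m}$ directly by $(2m)!\,c^{-2m}\int\cosh(c\lambda)\rho_{n,t_0}(\mathrm d\lambda)$ instead of choosing $c>c_0$ is only cosmetic.

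The gap is in the left half-line, where three of your bookkeeping claims are wrong.
\begin{enumerate}
\item \eqref{2.13}--\eqref{2.15} come from the upper-triangular factor in $e^{2\tau J_0}=R^TR$, so they govern only \emph{leading} principal minors. The telescoping in \eqref{3.3} therefore does not carry over to the trailing block $T=\{n,\dots,R\}$. Reversing the order of the sites turns that block into a leading minor, but for the Toda flow run backwards. Its second log-derivative is $4\widehat a_{n-1}(-\tau)^2$, not $4\widehat a_n^2$. Already at $\tau=0$ it equals $4\operatorname{tr}\bigl((J_0^2)[T,T]-J_0[T,T]^2\bigr)=4a_{n-1}^2$.
\item Your chain $I'_k=\{k,n+1,\dots,R\}$ is based at $\delta_n$. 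It therefore controls $\sigma_{n+1,-}$, not $\sigma_{n,-}$.
\item The comparison function for $\sigma^{q(t_0)}_{n,-}$ is not built from $a_{n-1}$. The chain comparison is exact through order two. The measure $\sigma^{q(t_0)}_{n,-}$ has mean $b_{n-1}$ and variance $a_{n-2}^2$. So the relevant coefficient is $a_{n-2}$, the first \emph{internal} coupling of $J_{n,-}$. This mirrors the right case: $F_{n,t_0}$ for $\sigma_{n,+}$ uses $a_n$, not the cross-cut coupling $a_{n-1}$. A translate of $\rho_{n-1,t_0}$ has variance $a_{n-1}(t_0)^2$ and is not what the construction produces.
\end{enumerate}

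The reflection you mention first is the right fix, and it is all that ``the same argument'' requires. Set $\tilde a_k(t)=a_{-k-1}(-t)$ and $\tilde b_k(t)=b_{-k}(-t)$. This is a doubly global Toda solution; the time reversal is necessary. Moreover, $\sigma^{\tilde q(-t_0)}_{1-n,+}=\sigma^{q(t_0)}_{n,-}$. The right-half-line result then gives
\[
\int_{\mathbb R}\cosh(c\lambda)\,\sigma^{q(t_0)}_{n,-}(\mathrm d\lambda)\le\frac{G(c/2)+G(-c/2)}2,\qquad G(\tau)=\exp\Bigl(2\tau b_{n-1}(t_0)+4\int_0^\tau(\tau-s)a_{n-2}(t_0-s)^2\,\mathrm ds\Bigr).
\]
Here $G(\tau)=e^{2\tau(b_{n-1}(t_0)+b_{n-2}(t_0))}F_{n-2,t_0}(-\tau)$. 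The bound is therefore finite for every $c>0$, because $F_{n-2,t_0}$ is entire by Lemma \ref{l11} and \eqref{3.9}.

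Your complementary-chain route also works once corrected. Take base $\{n-1,\dots,R\}$ and chain $\{k,n,\dots,R\}$ for $k\le n-1$. This reproduces $2J_{[L,n-1]}$ as a principal submatrix, and its $\Theta$ has second log-derivative $4\widehat a_{n-2}(-\tau)^2$, which matches $G$.
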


\begin{proof}
	Let $J_{n,+}(t_0)$ be the Jacobi expression on the right half-line, acting on finitely supported sequences.  Define its formal moments by
	\begin{equation}
		s_{k,+}^{(n,t_0)}
		:= \left\langle \delta_n,(J_{n,+}(t_0))^k\delta_n \right\rangle.
		\label{3.19}
	\end{equation}
	For fixed $m$, the vector $(J_{n,+}(t_0))^m\delta_n$ has finite support, so the left-hand side of \eqref{3.18} stabilizes at
	$2^{2m}s_{2m,+}^{(n,t_0)}$ as $R\to\infty$.  Taking $L,R$ sufficiently far so that \eqref{3.5} holds through order $2m$, we obtain from \eqref{3.16} and \eqref{3.18}
	\begin{equation}
		s_{2m,+}^{(n,t_0)}\le r_{2m}(n,t_0)
		\qquad (m\ge0).
		\label{3.20}
	\end{equation}
	
	For every $c_0>0$, choose $c>c_0$ and put
	$A=\int_{\mathbb R}e^{c|\lambda|}\rho_{n,t_0}(\mathrm d\lambda)$.
	Then
	\[
	r_{2m}(n,t_0)\leq A\frac{(2m)!}{c^{2m}}.
	\]
	Lemma \ref{l12} and \eqref{3.20} show that $J_{n,+}(t_0)$ is essentially
	self-adjoint and that its spectral measure has the formal moments
	\eqref{3.19}. Therefore monotone convergence and \eqref{3.20} give
	\begin{align*}
		\int_{\mathbb{R}}\cosh(c_0\lambda)
		\sigma_{n,+}^{q(t_0)}(\mathrm{d}\lambda)
		&=\sum_{m=0}^{\infty}\frac{c_0^{2m}}{(2m)!}
		s_{2m,+}^{(n,t_0)}\\
		&\le\sum_{m=0}^{\infty}\frac{c_0^{2m}}{(2m)!}r_{2m}(n,t_0)\\
		&=\int_{\mathbb{R}}\cosh(c_0\lambda)
		\rho_{n,t_0}(\mathrm{d}\lambda).
	\end{align*}
	This is \eqref{3.26}. The left-half-line assertion follows by the same
	argument. \bigskip
\end{proof}

\begin{proof}[Proof of exponential integrability]
	By Lemma \ref{l19} and $e^{c|\lambda|}\leq2\cosh(c\lambda)$, both boundary
	spectral measures of $q(t_0)$ have exponential moments of every order.
	Hence $q(t_0)\in\mathcal Q$. Since $t_0$ was arbitrary,
	$q(t)\in\mathcal Q$ for every $t\in\mathbb R$. \bigskip
\end{proof}

\begin{proof}[Proof of Uniqueness]
	Let $q^{(1)}(t)$ and $q^{(2)}(t)$ be two
	global solutions with the same initial datum, and let $F_{n,0}^{(j)}$ be the
	corresponding auxiliary functions. By Lemma \ref{l10}, all
	derivatives of $F_{n,0}^{(1)}$ and $F_{n,0}^{(2)}$ at the origin coincide.
	Their bilateral Laplace representations extend both functions to entire
	functions, and therefore
	\[
	F_{n,0}^{(1)}(z)=F_{n,0}^{(2)}(z),\qquad z\in\mathbb{C}.
	\]
	Taking the second logarithmic derivative and using positivity of the
	off-diagonal coefficients gives
	\[
	a_{n}^{(1)}(t)=a_{n}^{(2)}(t),
	\qquad n\in\mathbb{Z},\quad t\in\mathbb{R}.
	\]
	Finally, the second Toda equation and the common initial datums give
	\[
	b_n^{(j)}(t)=b_n(0)+2\int_0^t
	\left((a_{n}^{(j)}(s))^2-(a_{n-1}^{(j)}(s))^2\right)\mathrm{d}s,
	\]
	so $b_n^{(1)}(t)=b_n^{(2)}(t)$.
\end{proof}

\section{Global existence and well-posedness}
In this section, we will prove the global existence of doubly classical solution to the Toda lattice with initial datum $q \in \mathcal{Q}$, as well as the continuous dependence of the solutions on the initial data, thereby completing the proof of the main theorem.

For a doubly global classical solution $q(t)$, let
\begin{equation}
	L_+(z,t)=\mathcal L_{\sigma_+^{q(t)}}(z),
	\qquad
	L_-(z,t)=\mathcal L_{\sigma_-^{q(t)}}(z). \label{4.1}
\end{equation}
For entire functions $f$ and $g$, set
\[
\mathcal C(f,g)(z)
=
\int_0^z f(\zeta)g(z-\zeta)\,\mathrm d\zeta
=
z\int_0^1 f(\theta z)g((1-\theta)z)\,\mathrm d\theta .
\]

The boundary Laplace transforms defined in \eqref{4.1} satisfy the following equations:

\begin{lemma}\label{l14}
	Along every doubly global classical Toda trajectory, the boundary Laplace
	transforms satisfy
	\begin{equation}
		\begin{split}
			\partial_tL_+
			&=2\partial_zL_+-2b_0L_+
			-2a_{-1}^2\mathcal C(L_+,L_+),\\
			\partial_tL_-
			&=-2\partial_zL_-+2b_{-1}L_-
			+2a_{-1}^2\mathcal C(L_-,L_-).
		\end{split} \label{4.16}
	\end{equation}
	The equalities hold locally uniformly in $z$, uniformly for $t$ in compact
	intervals.
\end{lemma}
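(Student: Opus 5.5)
The plan is to prove \eqref{4.16} at the level of moments and then sum the Taylor series. Write
\[
s_k(t)=\left\langle \delta_0,(J^{q(t)}_{0,+})^k\delta_0\right\rangle=\int_{\mathbb R}\lambda^k\sigma_+^{q(t)}(\mathrm d\lambda).
\]
The second equality holds because, by the proof of exponential integrability (Lemma \ref{l19}), $J^{q(t)}_{0,+}$ is essentially self-adjoint for every $t$. Each $s_k(t)$ is a polynomial in finitely many coefficients $a_j(t),b_j(t)$, so it is $C^1$ in $t$, and its derivative can be computed by the chain rule from \eqref{1.1}.

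\emph{Step 1: matrix form of the flow.} On finitely supported vectors of the right half-line, I will write \eqref{1.1} as
\[
\tfrac{\mathrm d}{\mathrm dt}J_+=[P_+,J_+]-2a_{-1}^2E_{00}.
\]
Here $P_+$ is the antisymmetric matrix with entries $(P_+)_{n,n+1}=a_n$ and $(P_+)_{n+1,n}=-a_n$ (up to the sign convention that reproduces $\dot a_n=a_n(b_{n+1}-b_n)$), and $E_{00}=\delta_0\delta_0^T$. The correction term $-2a_{-1}^2E_{00}$ appears because $\dot b_0=2(a_0^2-a_{-1}^2)$ contains the coupling to the left half-line, which the half-line commutator does not produce. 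This is a finite check on the entries of $J_+$.

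\emph{Step 2: moment ODE.} I will differentiate $J_+^k$ by the Leibniz rule. The commutator part telescopes to $\langle\delta_0,[P_+,J_+^k]\delta_0\rangle$. Using $P_+\delta_0=\pm a_0\delta_1=\pm(J_+-b_0)\delta_0$ and antisymmetry of $P_+$, this equals $2s_{k+1}-2b_0s_k$; the sign is fixed by consistency with the free half-line flow $e^{2t\lambda}\sigma_+$ of Ifantis--Vlachou. The boundary part contributes
\[
-2a_{-1}^2\sum_{j=0}^{k-1}\langle\delta_0,J_+^{j}E_{00}J_+^{k-1-j}\delta_0\rangle=-2a_{-1}^2\sum_{i+j=k-1}s_is_j .
\]
Hence
\[
\dot s_k=2s_{k+1}-2b_0s_k-2a_{-1}^2\sum_{i+j=k-1}s_is_j .
\]
This matches \eqref{4.16} coefficientwise. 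Indeed, $\partial_zL_+=\sum_k s_{k+1}z^k/k!$, and expanding $\mathcal C(L_+,L_+)$ with the Beta integral $\int_0^z\zeta^i(z-\zeta)^j\,\mathrm d\zeta=i!\,j!\,z^{i+j+1}/(i+j+1)!$ shows that the coefficient of $z^k/k!$ in $\mathcal C(L_+,L_+)$ is $\sum_{i+j=k-1}s_is_j$.

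\emph{Step 3: summation (the main obstacle).} I need the termwise-differentiated series to converge locally uniformly in $z$, uniformly for $t$ in a compact interval $[-T,T]$. Fix $c>0$. By \eqref{3.26} with $n=0$,
\[
\int_{\mathbb R}\cosh(c\lambda)\,\sigma_+^{q(t)}(\mathrm d\lambda)\le\tfrac12\bigl(F_{0,t}(c/2)+F_{0,t}(-c/2)\bigr).
\]
The function $F_{0,t}(\pm c/2)$ involves only $b_0(t)$ and $a_0$ on $[t-c/2,t+c/2]$, so it is bounded for $|t|\le T$. This gives $s_{2k}(t)\le A(2k)!\,c^{-2k}$ uniformly in $|t|\le T$. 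By Cauchy--Schwarz, $|s_k(t)|\le A'k!\,c'^{-k}$ for any $c'<c$. With these bounds, the series for $L_+$, for $\partial_zL_+$, and for the convolution term, whose coefficients are bounded by $kA'^2(k-1)!\,c'^{-(k-1)}$, all converge uniformly for $|z|\le R<c'$ and $|t|\le T$. Since $c$ is arbitrary, this covers every disc $|z|\le R$. Integrating the moment ODE from $t_1$ to $t_2$ and summing termwise then gives the integrated form of \eqref{4.16}. Because the right-hand side is continuous in $t$, $L_+$ is $C^1$ in $t$ and the first equation of \eqref{4.16} holds with the stated uniformity.

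\emph{Left half-line.} I will repeat the argument for $J^{q(t)}_{0,-}$ with boundary vector $\delta_{-1}$. Reflecting the lattice reverses the orientation, so the free part becomes $-2s_{k+1}+2b_{-1}s_k$. The coupling term enters through $\dot b_{-1}=2(a_{-1}^2-a_{-2}^2)$, so it now appears as $+2a_{-1}^2E_{-1,-1}$, which gives the second equation. The bound on $\sigma_-^{q(t)}$ comes from the left version of Lemma \ref{l19}.
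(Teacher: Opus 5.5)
Your proposal is correct and follows essentially the same route as the paper: the boundary-corrected Lax equation $\dot J_+=[P_+,J_+]-2a_{-1}^2E_{00}$, telescoping to the moment recursion with a Cauchy-product boundary term, and termwise summation justified by uniform-in-$t$ exponential bounds from Lemma~\ref{l19}, via the boundedness of $F_{0,t}(\pm c/2)$ for $|t|\le T$, which you make more explicit than the paper does. The only soft spot is Step~2, where the sign should be fixed by the direct computation $P_+\delta_0=-a_0\delta_1=-(J_+-b_0)\delta_0$ (forced by requiring $[P_+,J_+]$ to reproduce $\dot a_n=a_n(b_{n+1}-b_n)$) rather than by appeal to consistency with the Ifantis--Vlachou flow.
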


\begin{proof}
	Let $J_+(t)$ and $J_-(t)$ be the two half-line Jacobi matrices.  Let $P(t)$ be
	the skew-symmetrization of $J(t)$
	\[
	P_{n,n+1}=a_{n},\qquad P_{n+1,n}=-a_{n},
	\]
	and let $P_\pm$ be its half-line restrictions.  If
	$E_+=\left| \delta_{0} \right\rangle \left\langle \delta_{0} \right|$ and
	$E_-=\left| \delta_{-1} \right\rangle \left\langle \delta_{-1} \right|$, direct multiplication at the
	boundary gives the Lax equations
	\begin{equation}
		\dot J_+=[P_+,J_+]-2a_{-1}^2E_+,
		\qquad
		\dot J_-=[P_-,J_-]+2a_{-1}^2E_-.
		\label{4.14}
	\end{equation}
	For
	\[
	s_{k,+}=\left\langle \delta_0,J_+^k\delta_0 \right\rangle,
	\qquad
	s_{k,-}=\left\langle \delta_{-1},J_-^k\delta_{-1} \right\rangle,
	\]
	the product rule and telescoping of the commutator give
	\begin{equation}
		\left\{
		\begin{array}
			[l]{l}%
			\dot s_{k,+}
			=2(s_{k+1,+}-b_0 s_{k,+})
			-2a_{-1}^2\sum_{\ell=0}^{k-1}s_{\ell,+}s_{k-1-\ell,+} \\
			\dot s_{k,-}
			=-2(s_{k+1,-}-b_{-1}s_{k,-})
			+2a_{-1}^2\sum_{\ell=0}^{k-1}s_{\ell,-}s_{k-1-\ell,-}
		\end{array}
		\right. . \label{4.15}
	\end{equation}
	where the sum is empty when $k=0$.  Indeed, in the plus case the commutator
	part contributes
	\[
	2\bigl(s_{k+1,+}-b_0 s_{k,+}\bigr),
	\]
	and the missing coupling across the cut contributes the convolution sum with
	coefficient $-2a_{-1}^2$; the signs are reversed on the left half-line.
	
	Note that if
	$f(z)=\sum f_kz^k/k!$ and $g(z)=\sum g_kz^k/k!$, then
	\[
	\mathcal{C}(f,g)(z)
	=\sum_{k=1}^{\infty}
	\left(\sum_{\ell=0}^{k-1}f_\ell g_{k-1-\ell}\right)\frac{z^k}{k!}.
	\]
	and we have
	\[
	L_{\pm}(z,t) = \sum_{k=0}^{\infty} s_{k,\pm}(t) \dfrac{z^{k}}{k!}.
	\]
	Then multiplying \eqref{4.15} by $z^k/k!$
	and summing gives \eqref{4.16}.  The locally uniform exponential bounds
	obtained from Lemma \ref{l19}
	justify all differentiations and summations uniformly on compact time
	intervals and compact $z$-sets. \bigskip
\end{proof}

The following estimate is the basic compact-time bound used below.  Its
form is exactly adapted to the fixed cut between $-1$ and $0$.

\begin{lemma}\label{l20}
	Let $q(t)$ be a doubly global classical Toda solution, with coefficients
	$\{a_n(t),b_n(t)\}_{n\in\mathbb Z}$, and let $L_\pm(z,t)$ be its two boundary
	Laplace transforms.
	For every $T>0$ and $|t|\le T$,
	\begin{equation}
		a_{-1}(t)^2
		\le a_{-1}(0)^2L_+(2t,0)L_-(-2t,0).\label{4.2}
	\end{equation}
	Moreover, for $t\in[0,T]$ and $t\in[-T,0]$, respectively,
	\begin{equation}
		\begin{gathered}
			|b_0(t)|,\ |b_{-1}(t)|
			\le \frac1T\log\bigl(2N_\pm(T)\bigr),\\
			\max\left\{
			\sup_{|x|\le3T}L_+(x,t),
			\sup_{|x|\le3T}L_-(x,t)
			\right\}
			\le N_\pm(T),
		\end{gathered} \label{4.3}
	\end{equation}
	where $c_0=10e^2$,
	\[
	\mathfrak M(T)
	=\max_{\varepsilon\in\{+,-\}}
	\int_{\mathbb R}e^{5T|\lambda|}
	\sigma_\varepsilon^{q(0)}(\mathrm d\lambda),
	\]
	and
	\[
	\log N_\pm(T)
	=c_0\log\mathfrak M(T)+c_0
	+c_0T\left|\int_0^{\pm T}a_{-1}(s)^2\,\mathrm ds\right|.
	\]
\end{lemma}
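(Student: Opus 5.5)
The plan is to read off both estimates from the transport equations of Lemma \ref{l14} by integrating along the characteristics of $\partial_t\mp2\partial_z$. The sign structure of the convolution term $\mathcal C(L,L)$ is what makes this work. Since $L_\pm(\cdot,t)>0$ on $\mathbb R$, for real $w$
\[
\mathcal C(L_\pm,L_\pm)(w)=w\int_0^1L_\pm(\theta w)L_\pm((1-\theta)w)\,\mathrm d\theta
\]
has the sign of $w$. Throughout I use only quantities controlled by Lemma \ref{l19}. That lemma guarantees all the $L_\pm(x,t)$ are finite and justifies the differentiations.

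\emph{Step 1: the bound \eqref{4.2}.} Fix $t$ and set $\phi_+(s)=\exp\bigl(2\int_0^sb_0\bigr)$ and $w(s)=2t-2s$. By \eqref{4.16},
\[
\frac{\mathrm d}{\mathrm ds}\Bigl[\phi_+(s)L_+(w(s),s)\Bigr]=-2a_{-1}(s)^2\phi_+(s)\,\mathcal C(L_+,L_+)(w(s),s).
\]
At $s=t$ the bracket equals $\phi_+(t)$, because $L_+(0,t)=1$. At $s=0$ it equals $L_+(2t,0)$. For $t\ge0$ we have $w\ge0$ on $[0,t]$, so the right side is $\le0$. For $t<0$ both $w$ and the orientation of integration flip. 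In either case this yields $\exp\bigl(2\int_0^tb_0\bigr)\le L_+(2t,0)$.

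The same computation for $L_-$, along $w(s)=-2t+2s$ and with the reversed signs in \eqref{4.16}, gives $\exp\bigl(-2\int_0^tb_{-1}\bigr)\le L_-(-2t,0)$. Since $\frac{\mathrm d}{\mathrm dt}\log a_{-1}^2=2(b_0-b_{-1})$, multiplying the two bounds gives \eqref{4.2}.

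\emph{Step 2: the bound on $b_0,b_{-1}$ from bounds on $L_\pm$.} The mean of $\sigma_+^{q(t)}$ is $s_{1,+}=b_0(t)$. Jensen's inequality gives $e^{xb_0(t)}\le L_+(x,t)$ for real $x$. Taking $x=\pm T$ yields
\[
|b_0(t)|\le T^{-1}\log\max\bigl(L_+(T,t),L_+(-T,t)\bigr),
\]
and the same holds for $b_{-1}$ via $L_-$. So the first line of \eqref{4.3} follows from the second line, with the harmless factor $2$.

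\emph{Step 3: the sup bound on $L_\pm$, which is the main obstacle.} Take $t\in[0,T]$; the case $t\in[-T,0]$ is symmetric. Integrating along the same characteristic from a general endpoint $x$ gives
\[
L_+(x,t)=\frac{L_+(x+2t,0)}{\phi_+(t)}-\frac{2}{\phi_+(t)}\int_0^t a_{-1}^2\phi_+\,\mathcal C(L_+,L_+)(x+2t-2s,s)\,\mathrm ds .
\]
Here $|x+2t|\le5T$, so $L_+(x+2t,0)\le\mathfrak M(T)$. Two pieces must be controlled. The first is the factor $\phi_+(t)^{-1}$. I bound it by Jensen applied to $L_+(-2t,\cdot)$, or equivalently by Step 1 run backwards. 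The second is the convolution term where the argument is negative, since there it has the unfavourable sign. That term is at most $|w|\sup L_+^2\le3T\,Y(s)^2$, where $Y(s)=\sup_{|x|\le3T}\max(L_+,L_-)(x,s)$.

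This gives a quadratic integral inequality for $\log Y$, with forcing weight $a_{-1}^2$. A naive Gronwall argument on $Y'\lesssim a_{-1}^2TY^2$ could blow up, so I would work with $\log Y$ instead. The key point is that $\mathcal C(L,L)(w)/L(w)$ stays controlled for $w<0$ on the smaller window, using log-convexity of $L_+$. Log-convexity gives $L(\theta w)L((1-\theta)w)\le L(0)L(w)=L(w)$. Hence $|\mathcal C(L,L)(w)|\le|w|L(w)$, which makes the inequality linear in $L$. Gronwall then gives
\[
\log Y(t)\lesssim\log\mathfrak M(T)+1+T\int_0^ta_{-1}^2,
\]
with crude absolute constants such as $c_0=10e^2$. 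The delicate bookkeeping is making the $\phi_+^{-1}$ bound and the shrinking $x$-windows ($5T\to3T$) fit together without circularity.
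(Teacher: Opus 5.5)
Your Steps 1 and 2 are correct. Step 1 is a mild variant of the paper's argument. You bound $\exp(2\int_0^tb_0)\le L_+(2t,0)$ and $\exp(-2\int_0^tb_{-1})\le L_-(-2t,0)$ separately and treat $t<0$ directly. The paper instead shows that the product $a_{-1}^2L_+(2(t-s),s)L_-(-2(t-s),s)$ is nonincreasing in $s$, and handles $t<0$ by reflection.

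The gap is in Step 3, exactly where you leave the bookkeeping open: the device you offer for $\phi_+(t)^{-1}$ does not work.
\begin{itemize}
\item Running Step 1 backwards (along $w(s)=-2s$) gives $\phi_+(t)^{-1}\le L_+(-2t,t)$. This bounds $\phi_+(t)^{-1}$ by the very quantity $Y(t)$ you want to control, with coefficient at least one, so inserting it into your representation formula is vacuous.
\item Jensen at the point $-2t$ gives $-2\int_0^tb_0\le\frac1t\int_0^t\log L_+(-2t,s)\,\mathrm ds$. This leads to an inequality of the form $u(t)\le A+\frac1t\int_0^tu(s)\,\mathrm ds$ with $A\ge0$, which every nondecreasing $u$ satisfies, so it yields no bound.
\item The linear representation, with its weights $\phi_+(s)/\phi_+(t)$ and quadratic term $3TY(s)^2$, is never turned into an actual integral inequality. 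The claimed Gronwall estimate is therefore not derived.
\end{itemize}

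The missing step is to drop $\phi_+$ altogether and integrate the logarithmic form along the characteristic through $(x,t)$, with $|x|\le3T$ and $0\le t\le T$:
\[
\log L_+(x,t)=\log L_+(x+2t,0)-\int_0^t\Bigl(2b_0(s)+2a_{-1}(s)^2\frac{\mathcal C(L_+,L_+)(w(s),s)}{L_+(w(s),s)}\Bigr)\mathrm ds,\qquad w(s)=x+2t-2s .
\]
Here $|w(s)|\le5T$, so $|\mathcal C(L_+,L_+)/L_+|\le5T$ by the log-convexity bound you already state; that bound holds for every real $w$ and needs no window. Next, $|b_0(s)|\le T^{-1}\log Y(s)$ by your Step 2 applied at the \emph{fixed} points $\pm T$, not at $\pm2t$. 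The same argument for $L_-$ gives
\[
\log Y(t)\le\log\mathfrak M(T)+\int_0^t\Bigl(\frac2T\log Y(s)+10T\,a_{-1}(s)^2\Bigr)\mathrm ds .
\]
Because the kernel is $2/T$, Gronwall on $[0,T]$ costs only a factor $e^2$:
\[
\log Y(t)\le e^2\log\mathfrak M(T)+10e^2T\int_0^Ta_{-1}^2\le\log N_+(T).
\]
Your window concern then disappears. The only values of $L_\pm$ needed outside $[-3T,3T]$ are initial ones on $[-5T,5T]$, which are bounded by $\mathfrak M(T)$.

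This is essentially the paper's argument. The paper takes the supremum over the characteristic-adapted windows $I_+=[-3T,5T]$ and $I_-=[-5T,3T]$ for $G_\pm$, and applies Jensen at $r=3T-2t$. Your fixed window, once the log form is used, shows that device is not needed.
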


\begin{proof}
	\medskip
	\noindent
	\textit{1. The mixed characteristic estimate.}
	We first treat $0\le t\le T$.  Fix such a $t$ and, for $0\le s\le t$, set
	\[
	x_+(s)=2(t-s),\qquad x_-(s)=-2(t-s),
	\]
	\[
	S(s)=a_{-1}(s)^2L_+(x_+(s),s)L_-(x_-(s),s).
	\]
	Along these two characteristics, Lemma \ref{l14} gives
	\[
	\frac{\mathrm d}{\mathrm ds}\log L_+(x_+(s),s)
	=-2b_0(s)-2a_{-1}(s)^2
	\frac{\mathcal C(L_+,L_+)(x_+(s),s)}{L_+(x_+(s),s)},
	\]
	\[
	\frac{\mathrm d}{\mathrm ds}\log L_-(x_-(s),s)
	=2b_{-1}(s)+2a_{-1}(s)^2
	\frac{\mathcal C(L_-,L_-)(x_-(s),s)}{L_-(x_-(s),s)}.
	\]
	Since
	\[
	\frac{\mathrm d}{\mathrm ds}\log a_{-1}(s)^2
	=2\bigl(b_0(s)-b_{-1}(s)\bigr),
	\]
	the diagonal terms cancel and
	\[
	\frac{\mathrm d}{\mathrm ds}\log S(s)
	=2a_{-1}(s)^2 \left( \dfrac{\mathcal C(L_-,L_-)(x_-(s),s)}{L_-(x_-(s),s)} - \dfrac{\mathcal C(L_+,L_+)(x_+(s),s)}{L_+(x_+(s),s)} \right)
	\]
	If $L$ is the bilateral Laplace transform of a positive measure, then
	$\mathcal C(L,L)(x)$ has the same sign as $x$.  Here $x_+(s)\ge0$ and
	$x_-(s)\le0$, so $S$ is nonincreasing.  Evaluating at $s=0$ and $s=t$
	gives \eqref{4.2} for $t\ge0$.
	
	\medskip
	\noindent
	\textit{2. Bounds for the boundary transforms.}
	We next prove the local bounds. We shall use
	\begin{equation}
		|\mathcal C(L,L)(x)|\le |x|L(x),
		\qquad
		|L'(0)|\le\frac1r\log\bigl(L(r)+L(-r)\bigr),
		\quad r>0.\label{4.4}
	\end{equation}
	For the first inequality, log-convexity of $L$ implies
	$L(x-y)L(y)\le L(x)$ whenever $y$ lies between $0$ and $x$; integration over
	the oriented segment from $0$ to $x$ proves the claim.  For the second one,
	Jensen's inequality gives
	\[
	\begin{aligned}
		\log\bigl(L(r)+L(-r)\bigr)
		&=\log\int_{\mathbb R}2\cosh(r\lambda)\,\mu(\mathrm d\lambda)\\
		&\ge\int_{\mathbb R}\log\bigl(2\cosh(r\lambda)\bigr)
		\mu(\mathrm d\lambda)\\
		&\ge r\int_{\mathbb R}|\lambda|\,\mu(\mathrm d\lambda)
		\ge r|L'(0)|.
	\end{aligned}
	\]
	
	Introduce the characteristic transforms
	\[
	G_+(x,t)=L_+(x-2t,t),\qquad
	G_-(x,t)=L_-(x+2t,t),
	\]
	and put
	\[
	I_+=[-3T,5T],\qquad I_-=[-5T,3T],
	\]
	\[
	M(t)=\max\left\{
	1,\ \sup_{x\in I_+}G_+(x,t),\
	\sup_{x\in I_-}G_-(x,t)
	\right\}.
	\]
	The asymmetric intervals are chosen so that they contain all characteristic
	preimages of $[-3T,3T]$ during $0\le t\le T$.
	
	Take $r=3T-2t\ge T$ in the second inequality of
	\eqref{4.4}.  Since
	\[
	L_+(r,t)=G_+(3T,t),\qquad
	L_+(-r,t)=G_+(4t-3T,t),
	\]
	and
	\[
	L_-(r,t)=G_-(3T-4t,t),\qquad
	L_-(-r,t)=G_-(-3T,t),
	\]
	all four arguments belong to the corresponding intervals $I_+$ and $I_-$.
	Using $L_+'(0,t)=b_0(t)$ and $L_-'(0,t)=b_{-1}(t)$, we obtain
	\[
	|b_0(t)|,\ |b_{-1}(t)|
	\le\frac1T\log\bigl(2M(t)\bigr).
	\]
	
	Lemma \ref{l14} also gives
	\[
	\partial_tG_+(x,t)=A_+(x,t)G_+(x,t),\qquad
	\partial_tG_-(x,t)=A_-(x,t)G_-(x,t),
	\]
	where
	\[
	A_+(x,t)=-2\left(
	b_0(t)+a_{-1}(t)^2
	\frac{\mathcal C(L_+,L_+)(x-2t,t)}{G_+(x,t)}
	\right),
	\]
	\[
	A_-(x,t)=2\left(
	b_{-1}(t)+a_{-1}(t)^2
	\frac{\mathcal C(L_-,L_-)(x+2t,t)}{G_-(x,t)}
	\right).
	\]
	For $x\in I_+$ or $x\in I_-$ and $0\le t\le T$, the relevant convolution
	argument has absolute value at most $5T$.  Hence
	\eqref{4.4} and the preceding bound for $b_0,b_{-1}$ imply
	\[
	|A_\pm(x,t)|
	\le\frac2T\log\bigl(2M(t)\bigr)+10Ta_{-1}(t)^2.
	\]
	Integrating the logarithmic form of these equations and then taking the
	supremum over $I_+$ and $I_-$ gives
	\[
	\log M(t)
	\le\log M(0)+\int_0^t\left(
	\frac2T\log\bigl(2M(s)\bigr)+10Ta_{-1}(s)^2
	\right)\mathrm ds.
	\]
	The linear Gronwall inequality therefore yields
	\[
	\log M(t)
	\le e^{2t/T}\log M(0)
	+\int_0^t e^{2(t-s)/T}
	\left(\frac2T\log2+10Ta_{-1}(s)^2\right)\mathrm ds,
	\]
	and consequently
	\[
	\log M(t)
	\le e^2\log M(0)+(e^2-1)\log2
	+10Te^2\int_0^Ta_{-1}(s)^2\,\mathrm ds.
	\]
	At $t=0$, the definition of $M$ gives $M(0)\le\mathfrak M(T)$.  Since
	$c_0=10e^2$, the definition of $N_+(T)$ implies $M(t)\le N_+(T)$.
	Moreover, if $|x|\le3T$, then $x+2t\in I_+$ and $x-2t\in I_-$, so
	\[
	L_+(x,t)=G_+(x+2t,t),\qquad
	L_-(x,t)=G_-(x-2t,t).
	\]
	This proves both estimates in \eqref{4.3} on $[0,T]$.
	
	\medskip
	\noindent
	\textit{3. Negative time.}
	For negative time, consider
	\[
	\widehat a_n(s)=a_n(-s),\qquad
	\widehat b_n(s)=-b_n(-s).
	\]
	This is again a Toda solution.  Up to the diagonal gauge
	$u_n\mapsto(-1)^nu_n$, its initial Jacobi operator is $-J_{q(0)}$; hence its
	boundary spectral measures are the pushforwards of the original ones under
	$\lambda\mapsto-\lambda$.  Applying the already proved positive-time result
	to the reflected solution gives \eqref{4.2} and
	\eqref{4.3} on $[-T,0]$, with $N_-(T)$.
	\bigskip
\end{proof}

\begin{proof}[Proof of continuity of $q(t)$ in the metric $d$]
	Let $t_j\to t_0$ and choose $T>|t_0|+1$ so that $|t_j|\leq T$ for all
	large $j$. Lemma \ref{l20}, applied with a sufficiently large time parameter,
	gives
	\[
	\sup_j\int_{\mathbb R}e^{c|\lambda|}
	\sigma_\pm^{q(t_j)}(\mathrm d\lambda)<\infty
	\qquad\text{for every }c>0.
	\]
	Since every coefficient is continuous in time,
	\[
	a_n(t_j)\longrightarrow a_n(t_0),\qquad
	b_n(t_j)\longrightarrow b_n(t_0)
	\qquad(n\in\mathbb Z).
	\]
	Lemma \ref{l7} therefore yields $d(q(t_j),q(t_0))\to0$. \bigskip
\end{proof}

\begin{lemma}\label{l16}
	Let $q_N\in\mathcal Q$ and suppose that each $q_N$ admits a doubly global
	classical Toda solution $q_N(t)$.  If
	\[
	d(q_N,q)\longrightarrow0
	\]
	for some $q\in\mathcal Q$, then $q$ admits a unique doubly global classical
	solution $q(t)$.  Moreover, for every $T>0$,
	\[
	\sup_{|t|\le T}d(q_N(t),q(t))\longrightarrow0.
	\]
\end{lemma}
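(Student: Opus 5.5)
Since $d(q_N,q)\to0$, Lemma \ref{l7} gives uniform exponential moment bounds \eqref{3.23} for $\sigma_\pm^{q_N}$ and coordinatewise convergence \eqref{3.24} of the initial coefficients. The plan is to turn these into $N$-uniform bounds on every coefficient $a_n^N(t)$, $b_n^N(t)$ and on their time derivatives on $[-T,T]$. Arzel\`a--Ascoli and a diagonal argument then produce a subsequential limit. We show this limit is a doubly global classical solution, prove uniqueness, and upgrade to convergence in $d$.

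\textbf{Step 1: uniform bounds at the cut.} First we bound $\int_0^{\pm T}a^N_{-1}(s)^2\,\mathrm ds$ uniformly in $N$. By \eqref{4.2}, $a^N_{-1}(t)^2\le a^N_{-1}(0)^2L^N_+(2t,0)L^N_-(-2t,0)$. The right-hand side is bounded uniformly in $N$ for $|t|\le T$, because $a^N_{-1}(0)\to a_{-1}(q)$ and the exponential moments are uniformly bounded. Lemma \ref{l20} then bounds $N_\pm(T)$ uniformly in $N$. Hence $|b_0^N(t)|$, $|b_{-1}^N(t)|$ and $\sup_{|x|\le3T}L^N_\pm(x,t)$ are uniformly bounded for $|t|\le T$, and $T$ is arbitrary.

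\textbf{Step 2: uniform bounds at every site.} Fix $n\ge0$. Every moment $s_{k,+}^N(t)$ of $\sigma_+^{q_N(t)}$ is bounded uniformly in $N$ and $|t|\le T$ by $k!(3T)^{-k}N_\pm(T)$. The coefficients $a^N_n(t)^2$ and $b^N_n(t)$ are ratios of Hankel determinants of these moments. So we need a uniform lower bound on the Hankel determinants $D_k^N(t)$ up to order $n+1$. This is the main obstacle. I would try to get it by combining Step 1 with a sitewise ODE argument. From \eqref{1.1}, $\frac{\mathrm d}{\mathrm dt}\log a^N_{k}=b^N_{k+1}-b^N_k$. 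Proceeding inductively in $k$ outward from the cut, one upper bound on $|b^N_k|$ and on $a^N_k$ then gives a lower bound on $a^N_k(t)\ge a^N_k(0)e^{-2T\sup|b|}$. The initial values $a^N_k(0)$ stay away from $0$ by \eqref{3.24}.

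The upper bounds come from the moments. We have $a_k^2+b_k^2\le\langle\delta_k,J^2\delta_k\rangle$, and the moments of $\sigma_{k,+}$ are controlled by those of $\sigma_{0,+}$ once $a_0,\dots,a_{k-1}$ are bounded below. The left half-line is handled symmetrically. An alternative is to apply Lemma \ref{l19} to $q_N$ at a fixed site $n$. Its bound $\frac12\bigl(F^N_{n,t}(c/2)+F^N_{n,t}(-c/2)\bigr)$ involves $\int a^N_n(s)^2\,\mathrm ds$, which can be controlled through the Step 1 argument transported to the cut at $n$.

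\textbf{Step 3: compactness and passage to the limit.} With uniform bounds on $a^N_n$, $1/a^N_n$ and $b^N_n$ on $[-T,T]$, \eqref{1.1} gives uniform Lipschitz bounds. Arzel\`a--Ascoli and a diagonal extraction over $n$ and $T\in\mathbb N$ give a subsequence converging locally uniformly to functions $a_n(t)>0$, $b_n(t)$. The integrated form of \eqref{1.1} passes to the limit, so the limit is a doubly global classical solution with $q(0)=q$. By the uniqueness proof of Section 3, the limit is unique, so the whole sequence converges coordinatewise, locally uniformly in $t$.

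\textbf{Step 4: convergence in $d$.} Finally, Lemma \ref{l20} (estimate \eqref{4.3} with $3T$ replaced by a larger parameter) gives $\sup_N\sup_{|t|\le T}\int e^{c|\lambda|}\sigma_\pm^{q_N(t)}(\mathrm d\lambda)<\infty$ for every $c$. Moments are polynomials in finitely many coefficients, so by Step 3 they converge uniformly in $|t|\le T$. Lemma \ref{l6}, applied along arbitrary sequences $t_N\in[-T,T]$ (which may be assumed convergent, using continuity of $t\mapsto q(t)$), then gives $\sup_{|t|\le T}p_R\bigl(\mathcal L_{\sigma_\pm^{q_N(t)}}-\mathcal L_{\sigma_\pm^{q(t)}}\bigr)\to0$ for each $R$. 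Together with uniform convergence of $a^N_{-1}$, this yields $\sup_{|t|\le T}d(q_N(t),q(t))\to0$.
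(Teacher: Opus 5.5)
Your Steps 1, 3 and 4 are fine. Step 2, which you yourself call the main obstacle, is not resolved, and the induction you propose there is circular.

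\textbf{Why the ODE induction fails.} The lower bound $a_k^N(t)\ge a_k^N(0)e^{-2T\sup|b|}$ comes from $\frac{\mathrm d}{\mathrm dt}\log a_k^N=b_{k+1}^N-b_k^N$. So it needs a bound on $|b_{k+1}^N|$, at the \emph{next} site; already for $k=0$, Step 1 does not control $b_1^N$. The moment route to $b_{k+1}^N$ goes through $\delta_{k+1}=p_{k+1}(J_{0,+})\delta_0$, whose leading coefficient is $1/(a_0\cdots a_k)$. It therefore presupposes the lower bound on $a_k^N$ you are trying to prove. Without that bound, the moments of $\sigma_{k,+}$ only give $(a_k^N)^2|b_{k+1}^N|\le C$ (third moment), hence $\frac{\mathrm d}{\mathrm dt}(a_k^N)^2\ge -C'$. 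That does not stop $a_k^N$ from reaching $0$ inside $[-T,T]$.

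\textbf{Why the alternative does not rescue it.} The bound in Lemma~\ref{l19} at site $n$ contains $b_n^N(t)$ and $a_n^N$ on a time window, which are exactly the unknowns. You could supply them by running \eqref{4.2} and Lemma~\ref{l20} at the cut whose coupling is $a_n$, i.e.\ the cut $n+1$; Lemma~\ref{l19} then becomes superfluous. But this requires $\sup_N\int e^{c|\lambda|}\sigma^{q_N}_{n+1,\pm}(\mathrm d\lambda)<\infty$ for the time-zero measures at that cut. The metric $d$ only controls the cut between $-1$ and $0$, and you do not prove this bound.
\begin{itemize}
\item For the half-line lying on one side of the original cut (e.g.\ $\sigma^{q_N}_{n+1,+}$ with $n\ge0$) it is easy: use Lemma~\ref{l4} together with $\delta_m=p_m(J_{0,+})\delta_0$, whose coefficients converge at $t=0$.
\item For the half-line straddling the original cut (e.g.\ $\sigma^{q_N}_{n+1,-}$ with $n\ge0$) you must show that attaching finitely many sites to the left half-line preserves all exponential moments, with constants depending only on the attached coefficients. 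A Dyson expansion in each rank-two coupling would do this.
\end{itemize}
With that lemma your real-variable route closes. \eqref{4.2} and Lemma~\ref{l20} at every cut bound all $a_n^N$ and $|b_n^N|$ from above on $[-T,T]$, the ODE then gives the lower bounds, and Arzel\`a--Ascoli applies. As written, the proposal does not supply this, so the proof is incomplete.

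\textbf{How the paper avoids the issue.} The paper never proves a priori lower bounds away from the cut. It bounds only $a_{-2}^N,a_{-1}^N,a_0^N,b_{-1}^N,b_0^N$, using \eqref{4.2}, \eqref{4.3} and the second moments of $L_{\pm,N}$. Since the $F_{n,N}$ are bilateral Laplace transforms, Montel's theorem gives locally uniform limits $F_n$ on $\mathbb C$. Hankel positivity and Lemma~\ref{l11} make $\tfrac14(\log F_n)''$ a variance, which is strictly positive on $\mathbb R$ because $a_n(q)>0$. The construction then moves outward with the Toda relations \eqref{4.9}--\eqref{4.10} on complex neighbourhoods of real intervals. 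There Cauchy's formula gives convergence of all time derivatives, and hence of the neighbouring coefficients, for free. Lower bounds for large $N$ come afterwards, from uniform convergence to a positive limit, and that is exactly what your Step 2 lacks.
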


\begin{proof}
	Write $a_n^{(N)}(t)$ and $b_n^{(N)}(t)$ for the coefficients of $q_N(t)$,
	and let $L_{\pm,N}$ be the corresponding boundary Laplace transforms.  We
	first extract a limit solution on compact time intervals and then prove that
	the whole sequence converges to it.
	
	\medskip
	\noindent
	\textit{1. Compactness at the fixed cut.}
	Fix $T,R>0$ and choose $S>T$ such that $3S>R$.  Since $d(q_N,q)\to0$, the
	numbers $a_{-1}^{(N)}(0)$ are bounded and the initial Laplace transforms
	converge locally uniformly.  The mixed estimate \eqref{4.2} gives
	\[
	\sup_N\sup_{|t|\le S}a_{-1}^{(N)}(t)<\infty.
	\]
	Consequently, the integrals of $(a_{-1}^{(N)})^2$ in $N_\pm(S)$ are uniformly
	bounded. The same is true of the initial exponential moments in
	$\mathfrak M(S)$. Lemma \ref{l20} therefore yields
	\begin{equation}
		\sup_N\sup_{|t|\le T}
		\left(
		p_R(L_{+,N}(\cdot,t))+p_R(L_{-,N}(\cdot,t))
		+|b_{-1}^{(N)}(t)|+|b_0^{(N)}(t)|
		\right)<\infty.\label{4.5}
	\end{equation}
	Indeed, for $|z|\le R$,
	\[
	|L_{\pm,N}(z,t)|
	\le L_{\pm,N}(R,t)+L_{\pm,N}(-R,t).
	\]
	Cauchy's estimate and the moment identities
	\[
	\partial_z^2L_{+,N}(0,t)
	=\bigl(b_0^{(N)}(t)\bigr)^2+\bigl(a_0^{(N)}(t)\bigr)^2,
	\]
	\[
	\partial_z^2L_{-,N}(0,t)
	=\bigl(b_{-1}^{(N)}(t)\bigr)^2+\bigl(a_{-2}^{(N)}(t)\bigr)^2
	\]
	then show that $a_{-2}^{(N)}$ and $a_0^{(N)}$ are also uniformly bounded on
	compact time intervals.
	
	For $n\in\mathbb Z$, let $F_{n,N}=F_{n}^{q_N}$ be the entire function from
	Theorem \ref{p26}.  On the real axis,
	\[
	F_{n,N}(t)=\exp\left(
	4\int_0^t(t-s)\bigl(a_n^{(N)}(s)\bigr)^2\,\mathrm ds
	\right).
	\]
	By \eqref{4.5} and the preceding bounds,
	$F_{n,N}(R)+F_{n,N}(-R)$ is bounded uniformly in $N$ for
	$n=-2,-1,0$ and every $R>0$.  Since $F_{n,N}$ is a bilateral Laplace
	transform,
	\[
	\sup_{|z|\le R}|F_{n,N}(z)|
	\le F_{n,N}(R)+F_{n,N}(-R).
	\]
	Thus these three families are normal.  A diagonal application of Montel's
	theorem gives a subsequence, still denoted by $N$, such that
	\begin{equation}
		F_{n,N}\longrightarrow F_n
		\quad\text{locally uniformly on }\mathbb C,
		\qquad n=-2,-1,0.\label{4.6}
	\end{equation}
	
	The Hankel positivity proved in \eqref{3.9} passes to the limit.  Lemma \ref{l11}
	therefore gives probability measures $\rho_n$ with exponential moments of
	every order such that
	\[
	F_n(z)=\int_{\mathbb R}e^{2\lambda z}\rho_n(\mathrm d\lambda),
	\qquad n=-2,-1,0.
	\]
	For real $t$, define the limiting coefficients directly by
	\begin{equation}
		a_n(t)^2=\frac14(\log F_n)''(t),
		\qquad n=-2,-1,0,\label{4.7}
	\end{equation}
	and take $a_n(t)$ to be the positive square root.  The right-hand side of
	\eqref{4.7} is the variance of $\lambda$ with respect to the tilted
	probability measure proportional to $e^{2t\lambda}\rho_n(\mathrm d\lambda)$;
	hence it is nonnegative.  If it vanished at one real point, then $\rho_n$
	would be a point mass and the right-hand side would vanish identically.  This
	is impossible because \eqref{4.6} and Cauchy's formula give
	\[
	a_n(0)^2
	=\lim_{N\to\infty}\bigl(a_n^{(N)}(0)\bigr)^2
	=a_n(q)^2>0.
	\]
	Thus $a_n(t)>0$ for all real $t$ and $n=-2,-1,0$.
	
	On a complex neighborhood of every compact real interval, $F_n$ has no
	zeros; by \eqref{4.6}, the same is true of $F_{n,N}$ for all large
	$N$.  Cauchy's formula then yields local uniform convergence, together with
	all derivatives, of
	\[
	\frac14(\log F_{n,N})''=\bigl(a_n^{(N)}\bigr)^2
	\]
	to the function $a_n^2$ in \eqref{4.7}.  Since all coefficients are
	positive on the real axis,
	\begin{equation}
		a_n^{(N)}\longrightarrow a_n,
		\qquad n=-2,-1,0,\label{4.8}
	\end{equation}
	uniformly on compact time intervals.
	
	Using the integral form of the second Toda equation, define
	\[
	b_{-1}(t)=b_{-1}(q)
	+2\int_0^t\bigl(a_{-1}(s)^2-a_{-2}(s)^2\bigr)\,\mathrm ds,
	\]
	\[
	b_0(t)=b_0(q)
	+2\int_0^t\bigl(a_0(s)^2-a_{-1}(s)^2\bigr)\,\mathrm ds.
	\]
	Lemma \ref{l7} and \eqref{4.8} imply that
	$b_{-1}^{(N)}\to b_{-1}$ and $b_0^{(N)}\to b_0$ uniformly on compact time
	intervals. Moreover, because the derivatives of these functions are fixed
	linear combinations of the holomorphic functions $(a_n^{(N)})^2$,
	$n=-2,-1,0$, the
	convergence holds locally uniformly, together with all derivatives, on a
	complex neighborhood of every compact real interval.
	
	\medskip
	\noindent
	\textit{2. Recursive construction of all coefficients.}
	We first move to the right.  Suppose that, for some $n\ge0$,
	$(a_n^{(N)})^2\to a_n^2$ and $b_n^{(N)}\to b_n$ locally uniformly, together
	with all derivatives, near every compact real interval, and that
	$a_n(t)^2>0$ on $\mathbb R$. On a common neighborhood where
	$(a_n^{(N)})^2$ and $a_n^2$ do not vanish, the Toda equations give the first
	line below, and we define $b_{n+1}$ and $a_{n+1}^2$ by the right-hand sides
	in the second line:
	\begin{equation}
		\begin{aligned}
			b_{n+1}^{(N)}
			&=b_n^{(N)}
			+\frac12\frac{\bigl((a_n^{(N)})^2\bigr)'}{(a_n^{(N)})^2},
			&
			\bigl(a_{n+1}^{(N)}\bigr)^2
			&=\bigl(a_n^{(N)}\bigr)^2
			+\frac12\bigl(b_{n+1}^{(N)}\bigr)',\\
			b_{n+1}
			&=b_n+\frac12\frac{(a_n^2)'}{a_n^2},
			&
			a_{n+1}^2
			&=a_n^2+\frac12b_{n+1}'.
		\end{aligned}\label{4.9}
	\end{equation}
	Hence the first line converges locally uniformly, together with all
	derivatives, to the second line.  In particular,
	$a_{n+1}(0)^2=a_{n+1}(q)^2>0$.
	
	The resulting bounds make $\{F_{n+1,N}\}_N$ a normal family by the same
	real-axis estimate as before.  Every cluster point has, on the real axis,
	the restriction
	\[
	\exp\left(
	4\int_0^t(t-s)a_{n+1}(s)^2\,\mathrm ds
	\right).
	\]
	The identity theorem makes the cluster point unique; hence $F_{n+1,N}$
	converges locally uniformly to an entire function $F_{n+1}$.  Passing Hankel
	positivity to the limit and repeating the variance argument used in
	\eqref{4.7} shows that
	\[
	a_{n+1}(t)^2=\frac14(\log F_{n+1})''(t)>0,
	\qquad t\in\mathbb R.
	\]
	Thus \eqref{4.9}, starting with $n=0$, constructs all
	coefficients to the right.
	
	To move to the left, suppose that $a_n^2$ and $b_{n+1}$ have already been
	constructed.  The corresponding identities are
	\begin{equation}
		\begin{aligned}
			b_n^{(N)}
			&=b_{n+1}^{(N)}
			-\frac12\frac{\bigl((a_n^{(N)})^2\bigr)'}{(a_n^{(N)})^2},
			&
			\bigl(a_{n-1}^{(N)}\bigr)^2
			&=\bigl(a_n^{(N)}\bigr)^2
			-\frac12\bigl(b_n^{(N)}\bigr)',\\
			b_n
			&=b_{n+1}-\frac12\frac{(a_n^2)'}{a_n^2},
			&
			a_{n-1}^2
			&=a_n^2-\frac12b_n'.
		\end{aligned}\label{4.10}
	\end{equation}
	Starting with $n=-2$ and applying the same normal-family, uniqueness, and
	variance arguments constructs all coefficients to the left.  Therefore,
	along the selected subsequence, for every fixed $n\in\mathbb Z$,
	\begin{equation}
		a_n^{(N)}\longrightarrow a_n,
		\qquad
		b_n^{(N)}\longrightarrow b_n \label{4.11}
	\end{equation}
	uniformly on compact time intervals, and $a_n(t)>0$ for all $n$ and $t$.
	
	\medskip
	\noindent
	\textit{3. Passage to the limit and convergence in $d$.}
	For every fixed $n$, the approximating solutions satisfy
	\begin{equation}
		\begin{aligned}
			a_n^{(N)}(t)
			&=a_n^{(N)}(0)+\int_0^t a_n^{(N)}(s)
			\bigl(b_{n+1}^{(N)}(s)-b_n^{(N)}(s)\bigr)\,\mathrm ds,\\
			b_n^{(N)}(t)
			&=b_n^{(N)}(0)+2\int_0^t
			\left(\bigl(a_n^{(N)}(s)\bigr)^2
			-\bigl(a_{n-1}^{(N)}(s)\bigr)^2\right)\,\mathrm ds.
		\end{aligned} \label{4.12}
	\end{equation}
	Passing to the limit in \eqref{4.12} by
	\eqref{4.11} shows that the limiting coefficients form a
	doubly global classical Toda solution with initial value $q$.
	The results of Section 3 imply that this solution lies in $\mathcal Q$ at
	every time and is unique.
	
	It remains to prove convergence in the metric. Fix $T>0$. Lemma \ref{l20},
	applied with a sufficiently large time parameter, gives for every $A>0$
	\[
	\sup_N\sup_{|t|\le T}
	\int_{\mathbb R}e^{A|\lambda|}
	\sigma_\pm^{q_N(t)}(\mathrm d\lambda)<\infty.
	\]
	We claim that, along the selected subsequence,
	\begin{equation}
		\sup_{|t|\le T}d(q_N(t),q(t))\longrightarrow 0.\label{4.13}
	\end{equation}
	Suppose otherwise. Then there are $\varepsilon_0>0$, a subsequence $N_j$,
	and $t_j\in[-T,T]$ such that
	\[
	d(q_{N_j}(t_j),q(t_j))\geq\varepsilon_0.
	\]
	After passing to a further subsequence, assume that $t_j\to t_0$. The
	coefficient convergence in \eqref{4.11}, uniform on $[-T,T]$, implies
	\[
	a_n^{(N_j)}(t_j)\longrightarrow a_n(t_0),\qquad
	b_n^{(N_j)}(t_j)\longrightarrow b_n(t_0)
	\qquad(n\in\mathbb Z).
	\]
	The preceding uniform exponential estimate and Lemma \ref{l7} therefore give
	\[
	d(q_{N_j}(t_j),q(t_0))\longrightarrow0.
	\]
	On the other hand, continuity of the limiting trajectory gives
	$d(q(t_j),q(t_0))\to0$, a contradiction.
	Thus \eqref{4.13} holds.
	
	Finally, the preceding argument applies to every subsequence of the original
	sequence, while uniqueness forces every subsequential limit to be $q(t)$.
	If the whole sequence failed to satisfy \eqref{4.13}, one could
	choose a subsequence staying a fixed positive distance from $q(t)$ on
	$[-T,T]$; that subsequence would have a further subsequence satisfying
	\eqref{4.13}, a contradiction.  Thus the whole sequence converges,
	and the proof is complete.
	\bigskip
\end{proof}

Choose a probability measure $\nu$ supported on $[-1,1]$ and having infinite
support. For $q \in \mathcal{Q}$ and $N\ge1$, define
\[
\sigma_{\pm,N}(\mathrm d\lambda)
=
\frac{
	\mathbf 1_{[-N,N]}(\lambda)\sigma_\pm^q(\mathrm d\lambda)
	+N^{-1}\nu(\mathrm d\lambda)
}{
	\sigma_\pm^q([-N,N])+N^{-1}
}.
\]
The additional term ensures that every $\sigma_{\pm,N}$ has infinite support. We join them by
setting $a_{-1}^{(N)}=a_{-1}(q)$ and denote the resulting two-sided Jacobi coefficients from $\sigma_{\pm,N}$ by
\[
q^{(N)}=\{a_n^{(N)},b_n^{(N)}\}_{n\in\mathbb Z}.
\]

\begin{lemma}\label{l18}
	For every $c>0$,
	\[
	\sup_{N\ge1}
	\int_{\mathbb R}e^{c|\lambda|}\sigma_{\pm,N}(\mathrm d\lambda)<\infty,
	\]
	and
	\[
	\mathcal L_{\sigma_{\pm,N}}
	\longrightarrow
	\mathcal L_{\sigma_\pm^q}
	\]
	locally uniformly on $\mathbb C$, and then $d(q^{(N)},q) \to 0$.  In particular, all moments of
	$\sigma_{\pm,N}$ converge to the corresponding moments of
	$\sigma_\pm^q$.
\end{lemma}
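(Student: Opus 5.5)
The plan is to first get the uniform exponential bound by elementary comparison, then deduce locally uniform convergence of the Laplace transforms from dominated convergence, and finally read off $d(q^{(N)},q)\to0$ directly from the definition \eqref{1.6}.

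\emph{Uniform bound.} Put $Z_N=\sigma_\pm^q([-N,N])+N^{-1}$. Since $\sigma_\pm^q$ is a probability measure, $Z_N\to1$. Moreover $Z_N\ge \sigma_\pm^q([-1,1])+N^{-1}$, which is bounded below by a positive constant for all $N\ge1$: either $\sigma_\pm^q([-1,1])>0$, or else $Z_N\ge\sigma_\pm^q([-N_0,N_0])>0$ for $N\ge N_0$ while only finitely many small $N$ remain. Hence $\inf_N Z_N=:\zeta>0$. For every $c>0$ this gives
\[
\int e^{c|\lambda|}\sigma_{\pm,N}(\mathrm d\lambda)\le \zeta^{-1}\Bigl(\int e^{c|\lambda|}\sigma_\pm^q(\mathrm d\lambda)+e^{c}\Bigr),
\]
where the integral is finite because $q\in\mathcal Q$.

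\emph{Convergence of the transforms.} For $|z|\le R$ we have $|e^{\lambda z}|\le e^{R|\lambda|}$, and $e^{R|\lambda|}$ is $\sigma_\pm^q$-integrable. Writing
\[
\mathcal L_{\sigma_{\pm,N}}(z)=Z_N^{-1}\Bigl(\int_{[-N,N]}e^{\lambda z}\sigma_\pm^q(\mathrm d\lambda)+N^{-1}\mathcal L_\nu(z)\Bigr),
\]
the tail term satisfies
\[
\sup_{|z|\le R}\Bigl|\int_{|\lambda|>N}e^{\lambda z}\sigma_\pm^q(\mathrm d\lambda)\Bigr|\le\int_{|\lambda|>N}e^{R|\lambda|}\sigma_\pm^q(\mathrm d\lambda)\to0.
\]
Also $N^{-1}\sup_{|z|\le R}|\mathcal L_\nu(z)|\le N^{-1}e^R\to0$ and $Z_N^{-1}\to1$. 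Together these give $p_R(\mathcal L_{\sigma_{\pm,N}}-\mathcal L_{\sigma_\pm^q})\to0$ for every $R$. Applying Cauchy's formula to these locally uniformly convergent entire functions then yields convergence of all moments.

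\emph{Convergence in $d$.} Each $\sigma_{\pm,N}$ has infinite support because of the $\nu$ term, and it has all exponential moments. Therefore the Jacobi coefficients of $\sigma_{\pm,N}$ are well defined, the half-line operators are essentially self-adjoint (Lemma~\ref{l12}), and their boundary spectral measures are exactly $\sigma_{\pm,N}$. So $q^{(N)}\in\mathcal Q$ with $\sigma_\pm^{q^{(N)}}=\sigma_{\pm,N}$. Since $a_{-1}^{(N)}=a_{-1}(q)$, the first term of \eqref{1.6} vanishes. Each $p_m$ term tends to zero, and dominated convergence for the series $\sum 2^{-m}\vartheta(\cdot)\le\sum2^{-m}$ then gives $d_{\mathcal L}(\sigma_{\pm,N},\sigma_\pm^q)\to0$. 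Hence $d(q^{(N)},q)\to0$.

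The only point needing care is the lower bound on $Z_N$ for small $N$; the rest is dominated convergence.
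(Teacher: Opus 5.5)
The paper omits the proof of this lemma as routine, so there is no argument to compare against, and your proposal is a correct and complete version of that routine verification. The domination by $\zeta^{-1}\bigl(\int e^{c|\lambda|}\sigma_\pm^q(\mathrm d\lambda)+e^{c}\bigr)$, the tail estimate giving $p_R(\mathcal L_{\sigma_{\pm,N}}-\mathcal L_{\sigma_\pm^q})\to0$, Cauchy's formula for the moments, and the termwise passage to the limit in the series \eqref{1.5} are all sound.

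Two minor remarks. First, positivity of $\inf_N Z_N$ follows at once from $Z_N\ge N^{-1}>0$ for every $N$ together with $Z_N\to1$, so your case distinction is unnecessary. Second, the step that actually carries content is not the lower bound on $Z_N$ but the identification $\sigma_\pm^{q^{(N)}}=\sigma_{\pm,N}$, without which $d(q^{(N)},q)$ is not even defined in terms of $\sigma_{\pm,N}$. You handle this correctly through determinacy and Lemma~\ref{l12}. It is more direct, however, to note that $\sigma_{\pm,N}$ is supported in $[-N,N]$. The half-line Jacobi operators are then bounded with norm at most $N$, so they are self-adjoint and have the stated spectral measures. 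This also shows that $q^{(N)}$ has bounded coefficients, which is exactly what the paper needs next in order to invoke the bounded Toda theory.
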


\begin{proof}
	We omit the proof since it is routine. \bigskip
\end{proof}

\begin{proof}[Proof of global existence and continuity]
	For each $N$, the standard bounded Toda theory gives a unique doubly global
	bounded solution $q^{(N)}(t)$ with initial value $q^{(N)}$; see
	\cite[Chapter 12]{GT}.  Lemma \ref{l16} and \ref{l18} imply that
	$q$ admits a doubly global classical solution $q(t)$ and that, for every
	$T>0$,
	\[
	\sup_{|t|\le T}d(q^{(N)}(t),q(t))\longrightarrow0.
	\]
	\bigskip
\end{proof}

Combining the proof of global existence and continuity in this section and the proof of exponential integrability and uniqueness in section 3, we complete the proof of Theorem 1.

\end{document}